\newtheorem{assumption}{Assumption}
 \newtheorem{remark}{Remark}
  \newtheorem{example}{Example}[section]
\newtheorem{theorem}{Theorem}
\newtheorem{proposition}{Proposition}
\newtheorem{lemma}{Lemma}
\def\Pr{\mathop{\rm Pr}}
\def\B{{\mathcal B}}
\def\P{{\mathcal P}}
\newcommand{\dd}{\mathrm{d}}
\newcommand{\adk}[1]{{\color{black} #1}}
\begin{document}

\sloppy
\title{Infinite Horizon Average Cost Optimality Criteria for Mean-Field Control\thanks{
E. Bayraktar is partially supported by the National Science Foundation under grant DMS-2106556 and by the Susan M. Smith chair.}
}
\author{ Erhan Bayraktar and Ali Devran Kara
\thanks{
 E. Bayraktar and A.D. Kara are with the Department of Mathematics,
     University of Michigan, Ann Arbor, MI, USA,
     Email:  \{erhan,alikara\}@umich.edu}
     }
\maketitle
\begin{abstract}
We study mean-field control problems in discrete-time under the infinite horizon average cost optimality criteria. We focus on both the finite population and the infinite population setups. We show the existence of a solution to the average cost optimality equation (ACOE) and the existence of optimal stationary Markov policies for finite population problems under (i) a minorization condition that provides geometric ergodicity on the collective state process of the agents, and (ii) under standard Lipschitz continuity assumptions on the stage-wise cost and transition function of the agents when the Lipschitz constant of the transition function satisfies a certain bound. For the infinite population problem, we establish the existence of a solution to the ACOE, and the existence of optimal policies under the continuity assumptions on the cost and the transition functions. Finally, we relate the finite population and infinite population control problems: (i) we prove that the optimal value of the finite population problem converges to the optimal value of the infinite population problem as the number of agents grows to infinity; (ii) we show that the accumulation points of the finite population optimal solution corresponds to an optimal solution for the infinite population problem, and finally (iii), we show that one can use the solution of the infinite population problem for the finite population problem symmetrically across the agents to achieve near optimal performance when the population is sufficiently large. % Similar results have been proven for finite horizon and infinite horizon discounted cost problems, however, it is the first result for the average cost optimality criteria to our knowledge.
\end{abstract}

%\tableofcontents

\section{Introduction}
We focus on optimal control of a team problem where the agents are only correlated through the so-called mean-field term, i.e. the distribution of the states of all agents. We aim to analyze the optimal control problem under the infinite horizon average cost criteria in discrete time.

The dynamics of the model are presented as follows: suppose that $N$ agents (decision makers or controllers) act cooperatively to minimize a cost function, and the agents share a common state and an action space denoted by $\mathds{X}\subset \mathds{R}^l$ and $\mathds{U}\subset{\mathds{R}}^m$ for some $l,m<\infty$. For any time step $t$, and agent $i\in\{1,\dots,N\}$ we have
\begin{align*}
x^i_{t+1}=f(x_t^i,u_t^i,\mu_{\bf x_t},w_t^i,w_t^0)
\end{align*}
for a measurable function $f$, where $\{w_t^i\}$ denotes the i.i.d. idiosyncratic noise process, and $\{w_t^0\}$ denotes the  i.i.d. common  noise process. Furthermore, $\mu_{\bf x}$ denotes the distribution of the agents on the state space $\mathds{X}$ such that for a given joint state ${\bf x}:=\{x^1,\dots,x^N\}\in \mathds{X}^N$, $\mu_{\bf x}:=\frac{1}{N}\sum_{i=1}^N\delta_{x^i}.$

Equivalently, for a given realization of the common noise, $w_t^0$, the next state of the agent $i$ is determined by some stochastic kernel $\mathcal{T}^{w^0_t}(\cdot|x_t^i,u_t^i,\mu_{\bf x_t})$ such that 
\begin{align}\label{kernel_common_noise}
& \Pr\biggl( X^i_{t+1}\in B \, \bigg|\, (X^j,U^j)_{[0,t]}=(x^j,u^j)_{[0,t]}, \forall j=1,\dots,N \biggr)\nonumber \\
& \qquad = \int \int_B \mathcal{T}^{w_t^0}( dx^i_{t+1}|x^i_{t}, u^i_{t},\mu_{\bf x_t})Pr(dw_t^0),  B\in \mathcal{B}(\mathds{X}), t\in \mathds{N},
\end{align}
that is, the conditional probability distribution of the state of some agent $i$, given the state and action history of all agents, depends only on the most recent state and action of agent $i$, and the state distribution of other agents by conditioning on the realizations of the common noise.  

For the remainder of the paper, by an abuse of notation, we will sometimes also denote the dynamics in terms of the vector state and action variables, ${\bf x}=(x^1,\dots,x^N)$, and ${\bf u}=(u^1,\dots,u^N)$, and vector noise variables ${\bf w}=(w^0,w^1,\dots,w^N)$ such that
\begin{align*}
{\bf x_{t+1}}=f({\bf x_t,u_t,w_t}).
\end{align*}

At each time stage $t$, each agent receives a cost determined by a measurable stage-wise cost function $c:\mathds{X}\times\mathds{U}\times \mathcal{P}_N(\mathds{X})\to \mathds{R}$, where $\mathcal{P}_N(\mathds{X})$ is the set of all empirical measures on $\mathds{X}$ constructed using $N$ dimensional state vectors.  That is, at time $t$, agent $i$ observes the cost
\begin{align*}
c(x^i_t,u^i_t,\mu_{\bf x_t}).
\end{align*}

For the initial formulation, every agent is assumed to know the state and action variables of every other agent. We define an admissible policy for an agent $i$, as a sequence of functions $\gamma^i:=\{\gamma^i_t\}_t$, where $\gamma^i_t$ is a $\mathds{U}$-valued (possibly randomized) function which is measurable with respect to the $\sigma$-algebra generated by 
\begin{align*}
I_t=\{{\bf x}_0,\dots,{\bf x}_t,{\bf u}_0,\dots,{\bf u}_{t-1}\}.
\end{align*}

Accordingly, an admissible {\it team} policy, is defined as $\gamma:=\{\gamma^1,\dots,\gamma^N\}$, where $\gamma^i$ is an admissible policy for the agent $i$. In other words, agents share the complete information.

The objective of the agents is to minimize the following infinite horizon average cost function
\begin{align*}
&J^N_\infty({\bf x}_0,\gamma):=\limsup_{T\to\infty}\frac{1}{T}\sum_{t=0}^{T-1}E_\gamma\left[{\bf c}({\bf X}_t,{\bf U}_t)\right]\\
&\text{where } {\bf c}({\bf x}_t,{\bf u}_t):=\frac{1}{N}\sum_{i=1}^Nc(x^i_t,u^i_t,\mu_{{\bf x}_t}).
\end{align*}
The optimal cost is defined by
\begin{align}\label{opt_cost}
J_\infty^{N,*}({\bf x}_0):=\inf_{\gamma\in\Gamma}J^N_\infty({\bf x}_0,\gamma)
\end{align}
where $\Gamma$ denotes the set of all admissible team policies.

We note that the value function that is achieved when agents share full information, as presented here, will be taken to be our reference point for simpler information structures.

\subsection{Literature Review}
Multi-agent systems attract attention due to their application flexibility to real world problems. Many problems that can be modeled as a multi-agent system may be intractable to model as a single agent system. Although they offer a practically rich and relevant model, the mathematical analysis is much more challenging compared to the single agent systems in general.  
The dynamics we consider in this paper represents a class of multi-agent systems which we will refer to as mean-field type multi-agent problems. For the mean-field type multi-agent systems, the population
is homogeneous and weakly interacting such that the dynamics of each
agent depends on the other agents only through the state distribution
of all the other agents.  The motivation of this formulation comes from systems where the effect of a single agent on the whole system is minimal (which is usually the case for large population systems). The mathematical tractability of the mean-field systems mainly relies on the observation that this weak coupling between the agents disappears through {\it propagation of chaos}.

If agents are non-cooperative, the mean-field systems in general are studied under mean-filed game theory. If the agents are cooperative, these problems are referred to as mean-field control. Mean-field game theory has been introduced independently by \cite{huang2006large,lasry2007mean}. There has been a great progress and a significant number of publications since then. For some of the work that has been done since, we cite \cite{gomes2014mean,carmona2013probabilistic,bensoussan2013mean,tembine2013risk,huang2007large,anahtarci2022q,elie2020convergence,fu2019actor,guo2019learning, perrin2020fictitious,subramanian2019reinforcement,saldi2018markov, saldi2019approximate} and references therein for mean-field game theory studies both in continuous and discrete time.

For the cooperative case, i.e. for mean-field control, similarly, there have been a number of studies dealing with structural as well as computational and learning-related methods. See \cite{bayraktar2018randomized,djete2022mckean, djete2022mckean1, lauriere2014dynamic,pham2017dynamic,carmona2021convergence,germain2022numerical,bayraktar2021mean,bayraktar2021solvability,bayraktar2023finite,sanjari2024optimality, sanjari2022optimality, sanjari2021optimal,lacker2017limit,fornasier2019mean,motte2022quantitative,motte2022mean,carmona2019model,gu2021mean,gu2019dynamic,angiuli2022unified} and the references therein for the study of the dynamic programming principle, learning methods, and justification of the exchangeability of agents for large (possibly infinite) agent team settings.

We note that most of the papers we have cited so far deal with either the finite horizon or the infinite horizon discounted cost criteria. The standard tools used to study these problems mostly involve the dynamic programming principle or the contraction property of the Bellman operator via the discount factor. However, the analysis of the infinite horizon average cost optimality criteria requires significantly different tools than the ones used in the study of finite horizon or discounted cost criteria. For the average cost criteria, one usually needs certain stability (or ergodicity) properties of the controlled process, to ensure the existence of a solution. %Hence, the number of studies for the average cost (or reward) setting is limited compared to the finite horizon or discounted cost problems. 

There are several works studying the mean-field game problems, i.e. the non-cooperative case, in continuous time under infinite horizon average cost criteria, see \cite{feleqi2013derivation,cardaliaguet2012long,arapostathis2017solutions,bardi2014linear,cirant2016stationary,dragoni2018ergodic,gomes2014existence}. We note that in the game problems, as the agents are self interested, it is relatively more direct to decouple the agents with respect to their cost. However, the same is not true for the mean-field control problems as the goal is the minimization of the social optima and thus the agents are coupled via their common cost functions as well. Furthermore, the state measure flow can be viewed as an exogenous flow for the game problems, and thus the stability and the ergodicity of the agent state processes can be analyzed separately. For the control problems, the stability of the state measure flow itself needs to be analyzed. 

\adk{
For continuous time mean-field control problems, we refer the reader to \cite{albeverio2022mean,bao2023ergodic} for the ergodic control of Mckean-Vlasov dynamics. \cite{albeverio2022mean} studies a class of team control problems where the dynamics of the agents does not depend on the mean-field term, however, the stage-wise cost function of the problem is dependent on the distribution of the states of the agents. In particular, the agent dynamics are coupled only through the control. The authors establish the precise relationship between  the finite population team problem, and the infinite population control problem, where the agents are decoupled and can use feedback policies with their local states. \cite{bao2023ergodic} considers a general model where the mean-field term can affect the dynamics via drift and diffusion. The authors do not consider the finite population problem and work directly with the limiting McKean-Vlasov dynamics, and show the existence of the unique viscosity solution for the ergodic control problem under a class of deterministic control functions which are assumed to be Lipschitz continuous in state and the mean-field term.}
 
In discrete-time, the number of studies that focus on the average-cost optimality criteria for mean-field problems is limited compared to the continuous-time studies. Furthermore, most of the existing work focuses on the mean-field game problems. \cite{saldi2020discrete,anahtarci2020value,biswas2015mean,wikecek2015stationary,wikecek2020discrete} are among the papers which study ergodic mean-field game problems in discrete time. \cite{wikecek2015stationary} considers mean-field game problems with discrete spaces. \cite{biswas2015mean,wikecek2020discrete} establish mean-field equilibria using the ergodicity properties of the agent state process under a mixing type condition on the agent dynamics. \cite{biswas2015mean}  also assumes that the dynamics do not depend on the mean-field of the agent states. \cite{saldi2020discrete,anahtarci2020value} provide value iteration algorithms for mean-field game problems where the convergence of these iterations are shown to the mean-field equilibria under certain minorization and mixing type conditions on the agent dynamics. We note that the papers that focus on the game setup aim to establish the mean field equilibria. Therefore, they are able to use the ergodicity properties of the dynamics by using stationary policies for the agents (independent of the mean-field term) under minorization conditions on the transition kernel. However, for mean-field control problems, the focus is mainly on establishing the existence of the global optimal performance for the team problem. In particular, the resulting measure valued control problem requires the use of policies that depend on the mean-field term (empirical distribution of the states). Hence, the minorization condition by itself may not be sufficient to establish the convergence to the stationary distributions of the collective agent states. Hence, in this paper, we make use of weak continuity assumptions on the agent dynamics without needing the ergodicity or minorization assumptions on the state process to establish the existence of optimal stationary and Markov policies.

Finally, to our knowledge, the only paper that deals with the discrete-time mean-field control problems under average cost (reward) criteria is \cite{bauerle2021mean}. In \cite{bauerle2021mean}, existence of optimal stationary Markov policies has been established through the average cost optimality inequality. It is assumed that the difference between the optimal value of any two initial points is uniformly bounded over all initial points and discount factors. This assumption is usually not easy to verify using conditions on  the primitive system components. Furthermore, in \cite{bauerle2021mean}, two special cases are considered, namely (i) when the cost function only depends on the mean-field term and the dynamics are independent of the mean-field term, and there is no common noise (ii)  when the cost functions only depend on the mean-field term (transitions can depend on the mean field term) and the admissible policies are Lipschitz continuous in the mean-field term. It is shown that the dynamic optimization problem reduces to a static optimization problem under either of these special settings. 

{\bf Contributions.}  Our contributors are itemized as follows:
\begin{itemize}
\item In Section \ref{meas_valued_sec}, we focus on the finite population control problem.
\begin{itemize}
\item In Section \ref{fin_rel_op}, we establish the existence of a solution to the average cost optimality equation (ACOE) and the existence of optimal stationary Markov policies under a minorization condition on the agent dynamics (Assumption \ref{main_assmp}). We also provide a relative value iteration algorithm and its convergence with a convergence rate. However, the convergence speed diminishes as the number of agents grows and the convergence may even fail for the infinite population problems.
\item In Section \ref{weak_finite}, we show the existence of a solution to the ACOE and the existence of optimal policies without the mixing (or minorization) condition. Instead, we work with the standard continuity conditions used to study mean-field control problems. Namely, we assume that the stage-wise cost function and the agent transition function are Lipschitz continuous, and the state and action spaces are compact (see Assumption \ref{lip_reg}). We note that avoiding the use of strong ergodicity assumptions (e.g. Assumption \ref{main_assmp}) on the state process is a significant relaxation.
\end{itemize}
\item In Section \ref{inf_problem}, we focus on the infinite population control problem, which can be modeled as a measure valued single agent control problem. Even though the problem can be seen as a Markov decision process (MDP), the standard tools used to study the average cost optimality criteria fail for the measure valued mean-field MDP problem (see Remark \ref{caution_remark} for more detail). Hence, we utilize different tools:
\begin{itemize}
\item In Section \ref{weak_inf}, we show that Assumption \ref{lip_reg}, that is the continuity of the transitions and the stage-wise cost function, is also sufficient for the infinite population problem to establish the existence of ACOE and the existence of optimal policies. 
\end{itemize} 
\adk{
\item Finally, in Section \ref{N_to_inf}, we focus on the relationship between the finite population and the infinite population control problems.
\begin{itemize}
\item In Section \ref{val_conv_inf} we show that the optimal average cost value of the finite population problem converges to the optimal average cost value of the infinite population problem as the number of agents grows to infinity.
\item In Section \ref{accum}, we show that the limit points of the convergent subsequences of the measure flow for the finite population problem (as $N$ grows) correspond to an optimal measure flow for the infinite population problem.
\item In Section \ref{sym_pol_sec}, we first provide an example that shows that the optimal policies for the finite population setting may have to be personalized and asymmetric. We then establish the near optimality of the symmetric policies designed for the infinite population problem, when they are used for the finite population problem under the discounted cost criteria, for sufficiently large $N$. We finally show that if the discount factor $\beta$ is sufficiently close to $1$, then this symmetric policy will achieve near optimal performance under the ergodic cost criteria as well.
\end{itemize}
}
\end{itemize}

\subsection{Metrics for Probability Measures} For the analysis of the technical results, we will make use of different distance notions for probability measures; total variation distance and the Wasserstein metric.

For some Polish,  a separable completely metrizable topological space $\mathds{X}$ and  for probability measures $\mu,\nu\in\mathcal{P}(\mathds{X})$, the \emph{total variation} metric is given by
  \begin{align*}
    \|\mu-\nu\|_{TV}&=2\sup_{B\in\mathcal{B}(\mathds{X})}|\mu(B)-\nu(B)|=\sup_{f:\|f\|_\infty \leq 1}\left|\int f(x)\mu(\dd x)-\int f(x)\nu(\dd x)\right|,
  \end{align*}

  \noindent where the supremum is taken over all measurable real valued $f$ such that $\|f\|_\infty=\sup_{x\in\mathds{X}}|f(x)|\leq 1$.

The other distance notion we will use in the paper is the Kantorovich-Rubinstein distance. For two probability measures  $\mu,\nu\in\mathcal{P}(\mathds{X})$, the Kantorovich-Rubinstein distance between them can be written as
\begin{align*}
W_1(\mu,\nu)=\sup_{\|f\|_{Lip}\leq 1} \left| \int f(x)\mu(dx)-\int f(x)\nu(dx)\right|
\end{align*}
where $\|f\|_{Lip}$ denotes the minimal Lipschitz constant of $f$. We note that this distance notion is equivalent to first order Wasserstein metric via the Kantorovich-Rubinstein duality. For the rest of the paper, we will sometimes refer to this metric as the Wasserstein distance.

\section{Ergodic Control of Finite Populations}\label{meas_valued_sec}
For the rest of the paper, we will analyze the problem by considering the controlled process to be the state distribution of the agents, rather than the state vector of the agents.

In this section, we will define an MDP for the distribution of the agents, where the control actions are the joint distribution of the state and action vectors of the agents. 

We let the state space to be  $\mathcal{Z}=\P_N(\mathds{X})$ which is the set of all empirical measures on $\mathds{X}$ that can be constructed using the state vectors of  $N$-agents. In other words, for a given state vector ${\bf x}=\{x^1,\dots,x^N\}$, we consider $\mu_{\bf x}\in \P_N(\mathds{X})$ to be the new state variable of the team control problem.

The admissible set of actions for some state $\mu\in\mathcal{Z}$, is denoted by $U(\mu)$, where
\begin{align}\label{add_act}
U(\mu)=\{\Theta\in \P_N(\mathds{U}\times\mathds{X})|\Theta(\mathds{U},\cdot)=\mu(\cdot)\},
\end{align}
that is, the set of actions for a state $\mu$, is the set of all joint empirical measures on $\mathds{X}\times\mathds{U}$ whose marginal on $\mathds{X}$ coincides with $\mu$.

We equip the state space $\mathcal{Z}$, and the action sets $U(\mu)$, with the first order Wasserstein distance $W_1$.

In order to  define the transition model for this centralized MDP, we note that the empirical distributions of the agents' states of the original team problem induces a controlled Markov chain. In particular, for some set $B\in \B(\mathcal{Z})$, we can write
\begin{align*}
&Pr(\mu_{t+1}\in B|\mu_t,\dots\mu_0,\Theta_t,\dots,\Theta_0)\\
&\qquad=\int_{{\bf x_t,u_t}\in \mathds{X}^N\times\mathds{U}^N}Pr(\mu_{t+1}\in B|{\bf x_t,u_t})Pr({\bf dx_t,du_t}|\mu_t,\dots\mu_0,\Theta_t,\dots,\Theta_0).
\end{align*}
For any $\mu_{\bf x_t,u_t}=\Theta_t$, and $\mu_{\bf x_t}=\mu_t$, the inside term can be written as
\begin{align*}
Pr(\mu_{t+1}\in B|{\bf x_t,u_t})=\int\mathds{1}_{\{\mu_{f({\bf x_t,u_t,\mu_{x_t},w_t})}\in B\}}P(d{\bf w_t})
\end{align*}
where $P(\cdot)$ is the probability measure governing the idiosyncratic and the common noise processes, and ${\bf w_t}$ is the noise vector with length $N+1$. 

If any two pairs $({\bf x_t,u_t}),({\bf x'_t,u'_t})$ have the same empirical distribution $\Theta_t$, then they can be viewed as reordered versions of each other. Furthermore, since the dynamics are identical for every agent, i.e. since the agent are exchangeable, for some ${\bf w_t}$, and ${\bf x_{t+1}}=f({\bf x_t,u_t,\mu_{\bf x_t}w_t})$, where $\mu_{\bf x_{t+1}}=\mu_{t+1}$, by reordering ${\bf w_t}$, one can construct some ${\bf w'_t}$ such that ${\bf x'_{t+1}}=f({\bf x'_t,u'_t,\mu_{\bf x_t},w'_t})$, where ${\bf x'_{t+1}}$ is just a reordered version of ${\bf x_{t+1}}$, and in particular $\mu_{\bf x_{t+1}}=\mu_{\bf x'_{t+1}}$.

Since the idiosyncratic noises are identically distributed for every agent, as a result of the above discussion, for any two pairs $({\bf x_t,u_t}),({\bf x'_t,u'_t})$ with the same empirical distribution $\Theta_t$, 
\begin{align*}
Pr(\mu_{t+1}\in B|{\bf x_t,u_t})=Pr(\mu_{t+1}\in B|{\bf x'_t,u'_t}).
\end{align*}

Therefore, the empirical distributions of the agents' states $\mu_t$, and of the joint state and actions $\Theta_t$ define a controlled Markov chain such that
\begin{align}\label{trans_measure}
&Pr(\mu_{t+1}\in B|\mu_t,\dots\mu_0,\Theta_t,\dots,\Theta_0)=Pr(\mu_{t+1}\in B|\mu_t,\Theta_t)\nonumber\\
&:=\eta(B|\mu_t,\Theta_t)\\
&=Pr(\mu_{t+1}\in B|{\bf x_t,u_t}), \text{ for any } ({\bf x_t,u_t}): \mu_{({\bf x_t,u_t})}=\Theta_t\nonumber
\end{align}
where $\eta(\cdot|\mu,\Theta)\in \P(\P_N(\mathds{X}))$ is the transition kernel of the centralized measure valued MDP, which is induced by the dynamics of the team problem.

We define the stage-wise cost function $k(\mu,\Theta)$ by
\begin{align}\label{cost_measure}
k(\mu,\Theta):=\int c(x,u,\mu)\Theta(du,dx)=\frac{1}{N}\sum_{i=1}^Nc(x^i,u^i,\mu).%={\bf c(x,u)}
\end{align}

%In this section, we will define a Markov decision process, with a central controller, where the state space is $\P_N(\mathds{X})$, and the action space is $%\P_N(\mathds{X}\times\mathds{U})$. In other words, the central controller observes the empirical distribution of the agents $\mathds{X}$, say $\mu_{\bf x}$, and selects an empirical distribution on $\mathds{X}\times\mathds{U}$, say $Q(dx,du)$, as an action with the restriction that the marginal on $\mathds{X}$ is $\mu_{\bf x}$.

%One can define a stage-wise cost function $k:\P_N(\mathds{X})\times\P_N(\mathds{X}\times\mathds{U})\to \mathds{R}$ and a transition kernel $\eta(\cdot|\mu,Q)\in \P(\P_N(\mathds{X}))$ (\adk{need to define these properly}). Thus, we can have a centralized  MDP. 

Thus, we have an MDP with state space $\mathcal{Z}$, action space $\cup_{\mu\in\mathcal{Z}}U(\mu)$, transition kernel $\eta$ and the stage-wise cost function $k$.

We define the set of admissible policies for this measured valued MDP as a sequence of functions $g=\{g_0,g_1,g_2,\dots\}$ such that at every time $t$, $g_t$ is measurable with respect to the $\sigma$-algebra generated by the information variables
\begin{align*}
I_t=\{\mu_0,\dots,\mu_t,\Theta_0,\dots,\Theta_{t-1}\}.
\end{align*}
%Furthermore, for every $f_t(\mu_t)=Q_t$, the first marginal of $Q_t$ has to be $\mu_t$. 
We denote the set of all admissible control policies by $G$ for the measure valued MDP.

In particular, we define the infinite horizon average expected cost function under a policy $g$ by
\begin{align*}
K^N_\infty(\mu_0,g)=\limsup_{T\to\infty}\frac{1}{T}\sum_{t=0}^{T-1}E_{\mu_0}^\eta\left[ k(\mu_t,\Theta_t)\right].
\end{align*}

We also define the optimal cost by
\begin{align}\label{opt_cost2}
K_\infty^{N,*}(\mu_0)=\inf_{g\in G}K^N_\infty(\mu_0,g).
\end{align}
\begin{remark}
We note that the measure valued control problem considered in this section, and the original team control problem are equivalent.  Any admissible policy for the original team problem can be realized as an admissible policy for the measure valued MDP problem, vice versa, and they achieve the same accumulated expected cost. In particular, the optimal value of two problems coincide. For further discussion see \cite{bayraktar2023finite,bauerle2021mean}.
\end{remark}

\subsection{Average Cost Optimality Equation for the Measure Valued MDP for Finite Populations}

The average cost optimality equation (ACOE) is a common tool to analyze the optimality of the average cost criteria for MDPs. In particular for the model described in the last section, the ACOE is given by
\begin{align}\label{acoe}
j^*+h(\mu)=\inf_{\Theta\in U(\mu)}\bigg(k(\mu,\Theta)+\int_{\P_N(\mathds{X})} h(\mu_1)\eta(d\mu_1|\mu,\Theta)\bigg),
\end{align}
for some  $h\in \B(\P_N(\mathds{X}))$ where $ B(\P_N(\mathds{X}))$ denotes the set of bounded and measurable functions on $\P_N(\mathds{X})$ and some constant $j^*$ where $U(\mu)$ denotes the set of admissible actions for the empirical distribution $\mu$. If  a solution to the ACOE  exists, that is if there exists some $j^*$ and $h$ that satisfy the ACOE, then the optimal value function is given by the constant $j^*$.
We present the following result for completeness and for future references:
\begin{theorem}\label{ver_thm}\cite{her89}
Suppose there exists  $h \in \B(\P_N(\mathds{X}))$ and some constant $j^*$ such that 
\begin{align*}
j^*+h(\mu)=\inf_{\Theta\in U(\mu)}\bigg(k(\mu,\Theta)+\int_{\P_N(\mathds{X})} h(\mu_1)\eta(d\mu_1|\mu,\Theta)\bigg),
\end{align*}
we then have
\begin{itemize}

\item[(i)] for any initial point $\mu\in\P_N(\mathds{X})$, the constant $j^*$ is the optimal infinite horizon average cost, that is
\begin{align*}
j^*=K^{N,*}_\infty(\mu)=\inf_{g\in G}K^N_\infty(\mu,g)
\end{align*}
for every $\mu\in \P_N(\mathds{X})
$.
\item[(ii)] If there exists a policy $g^*\in G$ achieving the minimum on the right hand side for every $\mu$, then this stationary policy is an optimal policy for the average infinite horizon cost problem; that is, if $g^*$ satisfies
\begin{align*}
j^*+h(\mu)=k(\mu,g^*(\mu))+\int_{\P_N(\mathds{X})} h(\mu_1)\eta(d\mu_1|\mu,g^*(\mu))
\end{align*}
then $K^{N}_\infty(\mu,g^*)=K_\infty^{N,*}(\mu)$.
\end{itemize}
\end{theorem}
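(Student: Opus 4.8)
The plan is to run the standard verification (telescoping) argument around the ACOE, exploiting that $h$ is bounded so that the boundary terms wash out in the Cesàro average. First I would establish the lower bound. Fix an arbitrary admissible policy $g \in G$ and an initial condition $\mu_0$, and let $(\mu_t, \Theta_t)$ be the induced state-action process. Since the right-hand side of the ACOE is an infimum over $U(\mu)$ and each realized $\Theta_t$ lies in $U(\mu_t)$, the ACOE yields the pointwise inequality
\[
j^* + h(\mu_t) \le k(\mu_t, \Theta_t) + \int_{\P_N(\mathds{X})} h(\mu_1)\, \eta(d\mu_1 \mid \mu_t, \Theta_t).
\]
Taking expectations under $E^\eta_{\mu_0}$, using the Markov property so that the conditional expectation of $h(\mu_{t+1})$ equals the integral term, and summing over $t = 0, \dots, T-1$, the $h$-terms telescope and I obtain
\[
T j^* + E^\eta_{\mu_0}[h(\mu_0)] \le \sum_{t=0}^{T-1} E^\eta_{\mu_0}[k(\mu_t, \Theta_t)] + E^\eta_{\mu_0}[h(\mu_T)].
\]
Dividing by $T$ and taking $\limsup_{T \to \infty}$, the boundary terms $\frac{1}{T}E^\eta_{\mu_0}[h(\mu_0)]$ and $\frac{1}{T}E^\eta_{\mu_0}[h(\mu_T)]$ vanish because $\|h\|_\infty < \infty$, giving $j^* \le K^N_\infty(\mu_0, g)$. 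As $g$ was arbitrary, $j^* \le K^{N,*}_\infty(\mu_0)$.

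For part (ii) I would repeat the same computation with the inequality replaced by equality. If $g^*$ attains the infimum for every $\mu$, then along the process generated by $g^*$ we have, at each step,
\[
j^* + h(\mu_t) = k(\mu_t, g^*(\mu_t)) + \int_{\P_N(\mathds{X})} h(\mu_1)\, \eta(d\mu_1 \mid \mu_t, g^*(\mu_t)),
\]
so the telescoping identity becomes an equality; dividing by $T$ and letting $T \to \infty$ (again discarding the bounded boundary terms) gives $K^N_\infty(\mu_0, g^*) = j^*$. Combined with the lower bound from the first step, this shows $g^*$ is optimal and $K^N_\infty(\mu_0, g^*) = K^{N,*}_\infty(\mu_0)$.

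To complete part (i) I still need the matching upper bound $K^{N,*}_\infty(\mu_0) \le j^*$ without presupposing the infimum is attained. The idea is to pass through near-optimal selectors: for each $\epsilon > 0$ and each $\mu$, the definition of the infimum furnishes some $\Theta^\epsilon_\mu \in U(\mu)$ with $k(\mu, \Theta^\epsilon_\mu) + \int h(\mu_1)\, \eta(d\mu_1 \mid \mu, \Theta^\epsilon_\mu) \le j^* + h(\mu) + \epsilon$. Assembling these into a stationary policy $g^\epsilon$ and repeating the telescoping with an additive $\epsilon$-slack per step yields $K^N_\infty(\mu_0, g^\epsilon) \le j^* + \epsilon$, and letting $\epsilon \downarrow 0$ gives $K^{N,*}_\infty(\mu_0) \le j^*$.

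I expect the main obstacle to be precisely this last step: guaranteeing that the $\epsilon$-optimal actions can be chosen to depend measurably on $\mu$, so that $g^\epsilon$ is a genuine admissible policy. This is a measurable-selection issue, resting on the lower semicontinuity/measurability of $(\mu, \Theta) \mapsto k(\mu,\Theta) + \int h\, d\eta$ together with a selection theorem of Kuratowski--Ryll-Nardzewski type; alternatively, since the statement is quoted from the framework of \cite{her89}, the existence of such $\epsilon$-optimal (or exactly optimal) selectors can be absorbed into the standing measurable-model hypotheses. By contrast, the telescoping itself and the vanishing of the boundary terms are routine once the boundedness of $h$ is in hand.
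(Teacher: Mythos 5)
Your proposal is the standard verification (telescoping) argument for the ACOE — lower bound for arbitrary admissible policies via the inequality form plus bounded boundary terms, equality along an exact minimizer for part (ii), and $\epsilon$-optimal measurable selectors to close the upper bound in part (i) — which is precisely the proof in the cited reference \cite{her89}; the paper itself states the theorem without proof. Your explicit flagging of the measurable-selection issue for the $\epsilon$-optimal stationary policy is the right caveat, and the rest of the argument is correct as written.
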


Thus, if one can guarantee the existence of a solution to the ACOE (\ref{acoe}), for the $N$-population measure valued MDP, existence of an optimal stationary policy for the distribution of the population can also be shown under appropriate measurable selection conditions. We will establish the existence of a solution to the ACOE under two different sets of assumptions. For the first case, we will assume a minorization condition on the dynamics of the agents, which will in turn give us a mixing type result for the state vector of all agents. For the second case, we will assume that the cost and transition functions of the agents are Lipschitz continuous without assuming the minorization condition.
%It turns out that the existence is ACOE related to the ergodicity properties of the controlled state process, i.e. the empirical distribution process $\mu_t$.
\begin{remark}
We note that the solution to the ACOE may not be unique; it is clear that $j^*$ is unique to the problem as it is equal to the optimal value; however, there may be several $h$ that satisfy the ACOE. In fact any shifted version of $h$, e.g. $h'=h + M$ for some $M<\infty$ will also satisfy the ACOE. 
\end{remark}

\subsection{Existence of a Solution to ACOE under Minorization and Geometric Ergodicity}\label{fin_rel_op}
We introduce a relative value iteration approach that converges to the ACOE in (\ref{acoe}). Define the operator $T: \B(\P_N(\mathds{X}))\to  \B(\P_N(\mathds{X}))$  such that for $v \in  \B(\P_N(\mathds{X}))$
\begin{align}\label{operator}
Tv(\mu):=\inf_{\Theta\in U(\mu)}\bigg(k(\mu,\Theta)+\int_{\P_N(\mathds{X})} v(\mu_1)\eta(d\mu_1|\mu,\Theta)\bigg),
\end{align}
Using this, we now define a relative operator,   $T_0: \B(\P_N(\mathds{X}))\to  \B(\P_N(\mathds{X}))$, such that for some fixed $\mu_0\in\P_N(\mathds{X})$
\begin{align}\label{rel_op}
T_0v(\mu):=Tv(\mu)-Tv(\mu_0).
\end{align}
%If one can show that (\ref{rel_op}) is a contraction under the supremum norm, then it will have a fixed point, say $v^* \in \B(\P_N(\mathds{X}))$ such that0
%\begin{align*}
%v^*(\mu)&=T_0v^*(\mu)\\
%&=\inf_{\Theta\in U(\mu)}\bigg(k(\mu,\Theta)+\int_{\P_N(\mathds{X})} v^*(\mu_1)\eta(d\mu_1|\mu,\Theta)\bigg)\\
%&\qquad\qquad-\inf_{\Theta\in U(\mu_0)}\bigg(k(\mu_0,\Theta)+\int_{\P_N(\mathds{X})} v^*(\mu_1)\eta(d\mu_1|\mu_0,\Theta)\bigg).
%\end{align*}
%Note that the above equation satisfies the ACOE with $$j^*=\inf_{\Theta\in U(\mu_0)}\bigg(k(\mu_0,\Theta)+\int_{\P_N(\mathds{X})} v^*(\mu_1)\eta(d\mu_1|\mu_0,\Theta)\bigg)$$ for the fixed state $\mu_0$.

%If we can show that the operator $T_0$, defined in (\ref{rel_op}), is a contraction, then one can use this operator to construct a relative value iteration method that will converge to the ACOE and hence will guarantee the existence of a stationary optimal policies. 

In what follows, we focus on the contraction property of the relative operator $T_0$.

Recall the stochastic kernel (see (\ref{kernel_common_noise})), $\mathcal{T}^{w^0}(\cdot|x_t^i,u_t^i,\mu_{\bf x_t}) \in \P(\mathds{X})$ for the agent dynamics.
\begin{assumption}\label{main_assmp}
There exists a non-trivial measure $\pi(\cdot)\in\B(\mathds{X})$ with $\pi(\mathds{X})>0$, and there exists measurable set $B \in \B(\mathds{W})$ with positive probability, $P(B)>0$, such that for any $x_t^i,u_t^i,\mu_{\bf x_t}$ we have 
\begin{align*}
\mathcal{T}^{w^0}(\cdot|x_t^i,u_t^i,\mu_{\bf x_t})\geq\pi(\cdot)
\end{align*}
for all $w^0\in B$.
\end{assumption}

\begin{theorem}\label{erg_acoe}
Under Assumption \ref{main_assmp}, we have that
\begin{align*}
\|T_0^k v_0-h\|_\infty\to 0
\end{align*}
for any $v_0\in\B(\P_N(\mathds{X}))$ and for some $h\in\B(\P_N(\mathds{X}))$, where $T_0^k$ denotes the operator $T_0$ defined by (\ref{rel_op}) applied $k$-times. Furthermore, $h$ satisfies the following ACOE
\begin{align*}
j^*+h(\mu)=\inf_{\Theta\in U(\mu)}\bigg(k(\mu,\Theta)+\int_{\P_N(\mathds{X})} h(\mu_1)\eta(d\mu_1|\mu,\Theta)\bigg).
\end{align*}
\end{theorem}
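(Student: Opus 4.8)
The plan is to deduce convergence of the relative value iteration from a strict contraction of the Bellman operator $T$ in (\ref{operator}) with respect to the \emph{span seminorm} $\mathrm{sp}(v):=\sup_{\mu}v(\mu)-\inf_{\mu}v(\mu)$. The contraction, in turn, will follow from a \emph{uniform} Doeblin minorization of the measure-valued kernel $\eta$: I aim to produce a single nonzero measure $\tilde\pi$ on $\P_N(\mathds{X})$, independent of $(\mu,\Theta)$, with $\eta(\cdot|\mu,\Theta)\ge\tilde\pi(\cdot)$ and total mass $\alpha:=\tilde\pi(\P_N(\mathds{X}))>0$. Obtaining this $\tilde\pi$ from the agent-level Assumption \ref{main_assmp} is the crux of the argument; everything afterward is the classical span-seminorm analysis.

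To minorize $\eta$ I work first with the joint $N$-agent state vector. Fix $(\mathbf{x},\mathbf{u})$ with $\mu_{\mathbf{x}}=\mu$ and $\mu_{(\mathbf{x},\mathbf{u})}=\Theta$. Conditioned on a realization $w^0$ of the common noise, the idiosyncratic noises are independent across agents, so the conditional law of $\mathbf{x}_{t+1}$ is the product $\bigotimes_{i=1}^N\mathcal{T}^{w^0}(\cdot|x^i,u^i,\mu)$. For $w^0\in B$, Assumption \ref{main_assmp} gives $\mathcal{T}^{w^0}(\cdot|x^i,u^i,\mu)\ge\pi(\cdot)$ in each factor; writing each factor as $\pi$ plus a nonnegative measure and expanding the product on rectangles yields $\bigotimes_{i=1}^N\mathcal{T}^{w^0}(\cdot)\ge\pi^{\otimes N}(\cdot)$. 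Since each $\bigotimes_i\mathcal{T}^{w^0}(\cdot)$ is a nonnegative measure, integrating over the common noise and restricting to $B$ gives
\[
\mathrm{Law}(\mathbf{x}_{t+1}|\mathbf{x},\mathbf{u})=\int \bigotimes_{i=1}^N\mathcal{T}^{w^0}(\cdot)\,P(dw^0)\ \ge\ \int_B \pi^{\otimes N}(\cdot)\,P(dw^0)=P(B)\,\pi^{\otimes N}(\cdot).
\]
Because $\mu_{t+1}=\mu_{\mathbf{x}_{t+1}}$ is the image of $\mathbf{x}_{t+1}$ under the empirical-measure map $\Phi(\mathbf{x})=\mu_{\mathbf{x}}$, pushing this bound forward yields $\eta(\cdot|\mu,\Theta)\ge \Phi_*\big(P(B)\,\pi^{\otimes N}\big)=:\tilde\pi(\cdot)$ for every $(\mu,\Theta)$, with mass $\alpha=P(B)\,\pi(\mathds{X})^N\in(0,1]$ (the upper bound since $\mathcal{T}^{w^0}\ge\pi$ forces $\pi(\mathds{X})\le1$).

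With the uniform minorization available, I would establish the span-contraction lemma $\mathrm{sp}(Tv-Tv')\le(1-\alpha)\,\mathrm{sp}(v-v')$ by the standard estimate: choosing $\varepsilon$-optimal actions at two comparison points and decomposing $\eta=\tilde\pi+(\eta-\tilde\pi)$, where $(\eta-\tilde\pi)\ge0$ has mass $1-\alpha$, one bounds $\int(v-v')\,d\eta_1-\int(v-v')\,d\eta_2\le(1-\alpha)\,\mathrm{sp}(v-v')$. Since $T$ commutes with the addition of constants (as $\eta$ is a probability measure), induction gives $T_0^k v_0=T^k v_0-(T^k v_0)(\mu_0)$, so with $w_k:=T^k v_0$ and $d_k:=w_{k+1}-w_k$ we have $T_0^{k+1}v_0-T_0^k v_0=d_k-d_k(\mu_0)$. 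Any function vanishing at $\mu_0$ has sup-norm at most its span, hence
\[
\|T_0^{k+1}v_0-T_0^k v_0\|_\infty\le\mathrm{sp}(d_k)\le(1-\alpha)^k\,\mathrm{sp}(Tv_0-v_0),
\]
which is summable because $\alpha>0$ and $\mathrm{sp}(Tv_0-v_0)<\infty$ (using boundedness of $k$, hence of $v_0$ and $Tv_0$). Thus $\{T_0^k v_0\}$ is Cauchy in $\|\cdot\|_\infty$ and converges to some $h\in\B(\P_N(\mathds{X}))$. Finally, $T$, and therefore $T_0$, is nonexpansive in sup-norm, so passing to the limit in $T_0(T_0^k v_0)=T_0^{k+1}v_0$ gives $T_0 h=h$, i.e. $Th=h+Th(\mu_0)$; setting $j^*:=Th(\mu_0)$ recovers the ACOE.

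The main obstacle, and the only point where Assumption \ref{main_assmp} is genuinely used, is the minorization step above: the common noise couples the agents, so the joint kernel is \emph{not} a product, and one must condition on $w^0$ before invoking the agent-level bound and only then integrate, retaining the mass contributed by $\{w^0\in B\}$. Once the uniform minorization of $\eta$ is secured, the measure-valued nature of the state space plays no further role, since $\alpha=P(B)\,\pi(\mathds{X})^N$ is uniform over all $(\mu,\Theta)$. I would also note that $\alpha$ decays geometrically in $N$, consistent with the paper's remark that the convergence rate deteriorates as the population grows and may fail in the infinite-population limit.
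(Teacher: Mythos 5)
Your proposal is correct and follows essentially the same route as the paper: a Doeblin minorization of the joint $N$-agent kernel by $P(B)\,\pi^{\otimes N}$ (pushed forward to the measure-valued kernel $\eta$), yielding a span-seminorm contraction of the Bellman operator with modulus $1-P(B)\pi(\mathds{X})^N$, from which sup-norm convergence of the relative iterates and the ACOE follow. The only (cosmetic) difference is at the end: you extract sup-norm convergence directly via the telescoping bound $\|T_0^{k+1}v_0-T_0^k v_0\|_\infty\le\mathrm{sp}(T^{k+1}v_0-T^kv_0)$ and a Cauchy argument, whereas the paper invokes a fixed-point theorem in the span quotient and then pins down the additive constant using $T_0^k v_0(\mu_0)=0$; both are valid and your version is, if anything, slightly more self-contained.
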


\begin{proof}
We define the {\it span} semi-norm for a function $f$ such that
\begin{align*}
\|f\|_{sp}:=\sup_{x}f(x)-\inf_x f(x).
\end{align*}
We will show that the operator $T_0$ is a contraction under the span semi-norm.

Let $f,g$ be measurable functions and $\mu,\mu'\in\P_N(\mathds{X})$. Then by the definition of $T_0$, we can write:
\begin{align*}
&|\left(T_0f(\mu)-T_0g(\mu)\right)-\left(T_0f(\mu')-T_0g(\mu')\right)|\\
&=|(Tf(\mu)-Tf(\mu_0)) - (Tg(\mu)-Tg(\mu_0))-(Tf(\mu')-Tf(\mu_0)) + (Tg(\mu')-Tg(\mu_0)) |\\
&\phantom{}=|(Tf(\mu)-Tg(\mu)) - (Tf(\mu')-Tg(\mu'))|\\
&=\Bigg|\inf_{\Theta\in U(\mu)}\bigg(k(\mu,\Theta)+\int_{\P_N(\mathds{X})} f(\mu_1)\eta(d\mu_1|\mu,\Theta)\bigg)\\
&\phantom{xxxx}-\inf_{\Theta\in U(\mu)}\bigg(k(\mu,\Theta)+\int_{\P_N(\mathds{X})} g(\mu_1)\eta(d\mu_1|\mu,\Theta)\bigg)\\
&\qquad\qquad-\Bigg(\inf_{\Theta\in U(\mu')}\bigg(k(\mu',\Theta)+\int_{\P_N(\mathds{X})} f(\mu_1)\eta(d\mu_1|\mu',\Theta)\bigg)\\
&\phantom{xxxxxxxxxxxxxxx} - \inf_{\Theta\in U(\mu')}\bigg(k(\mu',\Theta)+\int_{\P_N(\mathds{X})} g(\mu_1)\eta(d\mu_1|\mu',\Theta)\bigg)\Bigg)\Bigg|.
\end{align*}
Assume now without loss of generality that the first difference is greater than the second difference, and denote the $\epsilon$-near achieving (since the existence of the minimizers is not guaranteed) action selections by $\Theta^f, \Theta^g, \Theta^f_0,\Theta_0^g$ respectively in the order of the terms in the equation. We can then find an upper-bound as:
\begin{align}\label{contrac_bound}
&|(Tf(\mu)-Tg(\mu)) - (Tf(\mu')-Tg(\mu'))|\nonumber\\
&\leq \bigg(k(\mu,\Theta_g)+\int_{\P_N(\mathds{X})} f(\mu_1)\eta(d\mu_1|\mu,\Theta_g)-k(\mu,\Theta_g)-\int_{\P_N(\mathds{X})} g(\mu_1)\eta(d\mu_1|\mu,\Theta_g)\bigg)\nonumber\\
&-\bigg(k(\mu',\Theta'_f)+\int_{\P_N(\mathds{X})} f(\mu_1)\eta(d\mu_1|\mu',\Theta'_f)-k(\mu',\Theta'_f)-\int_{\P_N(\mathds{X})} g(\mu_1)\eta(d\mu_1|\mu',\Theta'_f)\bigg) + \epsilon\nonumber\\
&\leq \int_{\P_N(\mathds{X})} f(\mu_1)-g(\mu_1)\eta(d\mu_1|\mu,\Theta_g)-\int_{\P_N(\mathds{X})} f(\mu_1)-g(\mu_1)\eta(d\mu_1|\mu',\Theta'_f) +\epsilon.
\end{align}
For any  $\mu,\mu'\in\P_N(\mathds{X})$ and any $\Theta\in U(\mu),\Theta' \in U(\mu')$ there exists state and action vectors ${\bf x,x',u,u'}$ such that $\mu_{\bf x}=\mu,\mu_{\bf x'}=\mu'$ and $\mu_{\bf (x,u)}=\Theta, \mu_{\bf (x',u')}=\Theta'$. Furthermore, following the definition of $\eta$ (see (\ref{trans_measure})), we have that for any $A\in \B(\P_N(\mathds{X}))$
\begin{align}\label{eta_vector}
&\eta(A|\mu,\Theta)=Pr(\mu_1\in A|{\bf x,u})=Pr(\{{\bf x_1\in\mathds{X}}:\mu_{\bf x_1}\in A\}|{\bf x,u})\nonumber\\
&\eta(A|\mu',\Theta')=Pr(\mu_1\in A|{\bf x',u'})=Pr(\{{\bf x_1\in\mathds{X}}:\mu_{\bf x_1}\in A\}|{\bf x',u'}).
\end{align}
Note that $\mathds{X}^N$ and $\mathds{U}^N$ are equipped with the product topology, and the vector state process ${\bf x}_t\in\mathds{X}^N$ is also a controlled Markov chain, where the control variables are ${\bf u}_t\in\mathds{U}^N$ are simply the action vector variables. We write the transition kernel of the vector state process as $P({\bf x}_{t+1}\in \cdot|{\bf x_t,u_t})$.

Consider $\pi(\cdot)\in\P(\mathds{X})$ from Assumption \ref{main_assmp}, we define the product measure $\pi^N(\cdot)\in\P(\mathds{X}^N)$ using $\pi(\cdot)$, that is, for any rectangular set $E=E_1\times\dots\times E_N$ where $E_1,\dots,E_N\subset \mathds{X}$, we have that 
\begin{align*}
\pi^N(E)=\pi(E_1)\times\dots\times\pi(E_N).
\end{align*} 
Furthermore, using the conditional independence of the agent states, $x^j$ for $j=1,\dots,N$, given ${\bf x,u}$ (or given $x^j,u^j,\mu_{\bf x}$), and given the common noise realization we can also write that 
\begin{align*}
P({\bf x_1}\in E|{\bf x,u})=\int\prod_{j=1}^N \mathcal{T}^{w^0}(x^j_1 \in E_j|x^j,u^j,\mu_{\bf x})P(dw^0)
\end{align*}

Consider a general measurable set $A\in\B(\mathds{X}^N)$, which can be written as a countable union of disjoint rectangular sets, say $\{E^i\}_i$, such that $\cup_iE^i=A$. We can then write 
\begin{align*}
P({\bf x_1}\in A|{\bf x,u})&=\sum_{i}P(E^i|{\bf x,u})=\sum_i \int \prod_{j=1}^N \mathcal{T}^{w^0}(E^i_j|x^j,u^j,\mu_{\bf x})P(dw^0)\\
&\geq \sum_i \prod_{j=1}^N \pi(E^i_j)P(B)=P(B)\sum_i \pi^N(E^i)=P(B)\pi^N(A).
\end{align*}
Note that by construction, $\pi^N(\cdot)$ is a non-trivial measure, and furthermore, $\pi^N(\mathds{X}^N)=\pi(\mathds{X})^N<1$. We denote by $\hat{P}(\cdot|{\bf x,u}):=P(\cdot|{\bf x,u})- P(B)\pi^N(\cdot)$. Going back to (\ref{contrac_bound}) and denoting $h(\mu):=f(\mu)-g(\mu)$, we write
\begin{align*}
&\left(\int_{\P_N(\mathds{X})} f(\mu_1)-g(\mu_1)\eta(d\mu_1|\mu,\Theta_g)-\int_{\P_N(\mathds{X})} f(\mu_1)-g(\mu_1)\eta(d\mu_1|\mu',\Theta'_f)\right) + \epsilon\\
&=\int h(\mu_{\bf x_1}) P(d{\bf x_1}|{\bf x,u_g}) - \int h(\mu_{\bf x_1})P(d\bf{x_1}|{\bf x',u'_f}) + \epsilon\\
& =\int h(\mu_{\bf x_1}) \hat{P}(d{\bf x_1}|{\bf x,u_g}) - \int h(\mu_{\bf x_1})\hat{P}(d\bf{x_1}|{\bf x',u'_f}) + \epsilon\\
&\leq \sup_\mu h(\mu)\hat{P}(\mathds{X}^N|{\bf x,u_g}) - \inf_\mu h(\mu)\hat{P}(\mathds{X}^N|{\bf x',u'_f}) + \epsilon\\
&\leq  \|f-g\|_{sp}\left( 1-P(B)\pi^N(\mathds{X}^N)\right) +\epsilon.
\end{align*}
Since $\mu,\mu'$ and $\epsilon$ are arbitrary, taking $\epsilon\to0$ shows that $T_0$ is a contraction under the span norm. \adk{ Hence, using an extended version of  Banach Fixed Point Theorem under the span semi-norm, one can show that there exists a $h$ such that $\|T_0 h - h\|_{sp}=0$. Note that the fixed point theorem is used under the span semi-norm which is a pseudo metric. Hence, the fixed point is only unique under the quotient space that is defined by the equivalent classes of functions that are invariant under constant shifts. However, for the original space of functions, there might be several fixed points. In other words, any function $v^*$ such that $T_0 h (\mu)- h(\mu)=C$  for some constant $C<\infty$ for all $\mu$, is a fixed point of the operator $T_0$ under the span semi-norm.} We can also write this as
\begin{align*}
h(\mu)=&\inf_{\Theta\in U(\mu)}\bigg(k(\mu,\Theta)+\int_{\P_N(\mathds{X})} h(\mu_1)\eta(d\mu_1|\mu,\Theta)\bigg)\\
&-\inf_{\Theta\in U(\mu_0)}\bigg(k(\mu_0,\Theta)+\int_{\P_N(\mathds{X})} h(\mu_1)\eta(d\mu_1|\mu_0,\Theta)\bigg) + C.
\end{align*}
Note that $T_0$ is constructed so that $T_0^k v_0 (\mu_0)=0$ for all $k$. Hence,  the fixed point also satisfies $h(\mu_0)=0$. For the above equation, we then must have that $C=0$. Therefore, among the span-norm equivalent set of functions that $T_0^k$ converges to under the span semi-norm, for the one with $h(\mu_0)=0$, we have convergence under the uniform norm as well. That is $\|T_0^k v_0 - h\|_\infty \to 0$, for any $v_0\in \B(\P_n(\mathds{X}))$, such that $h(\mu_0)=0$ and it satisfies the following
\begin{align*}
h(\mu)=&\inf_{\Theta\in U(\mu)}\bigg(k(\mu,\Theta)+\int_{\P_N(\mathds{X})} h(\mu_1)\eta(d\mu_1|\mu,\Theta)\bigg)\\
&-\inf_{\Theta\in U(\mu_0)}\bigg(k(\mu_0,\Theta)+\int_{\P_N(\mathds{X})} h(\mu_1)\eta(d\mu_1|\mu_0,\Theta)\bigg)
\end{align*}
which is in the form of ACOE with
\begin{align*}
j^* = \inf_{\Theta\in U(\mu_0)}\bigg(k(\mu_0,\Theta)+\int_{\P_N(\mathds{X})} h(\mu_1)\eta(d\mu_1|\mu_0,\Theta)\bigg).
\end{align*}

\end{proof}

\begin{remark}\label{inf_fails}
Assumption \ref{main_assmp} requires that under a set of positive probability on the common noise, an agent can go to a subset of the state space $\mathds{X}$, with positive probability decided by the measure $\pi$ no matter what the current state, action, and the empirical measure are. Note that $\pi$ does not have to be a probability measure. However, if $\pi(\mathds{X})<1$, then the contraction constant ${1-P(B)\pi(\mathds{X})^N}$ we use in the proof goes to $1$ as $N$ grows. This suggests that for large (or infinite) populations, Theorem \ref{erg_acoe} no longer has a useful conclusion. One way to avoid this problem is to require $\pi$ to be a probability measure. In other words, if there exists a measurable set $B$ with positive probability, $P(B)>0$, such that for any $x_t^i,u_t^i,\mu_{\bf x_t}$ we have 
\begin{align*}
\mathcal{T}^{w^0}(\cdot|x_t^i,u_t^i,\mu_{\bf x_t})=\pi(\cdot)
\end{align*}
for all $w^0\in B$
where $\pi$ is a probability measure, then Theorem \ref{erg_acoe} will still provide a contraction constant, uniform over the population $N$. This condition can be thought of as resetting the distribution of every agent to $\pi$ under certain common noise realizations. This is clearly a strong assumption, and maybe an artificial one. Thus, in the following we will use a different set of assumptions for the analysis of ACOE which will be valid for large (and infinite) population problems as well.
\end{remark}

\begin{remark}
Assumption \ref{main_assmp}, we use in this section, is also a sufficient condition for geometric ergodicity of the vector state process of the team under stationary policies. Consider two team state processes under the same policy $\gamma$ where one starts from some $\hat{\pi}_0$ and the other starts from $\pi_0$. Denote by $\pi_t^\gamma\in \P(\mathds{X}^N)$ and $\hat{\pi}_t^\gamma\in \P(\mathds{X}^N)$ the marginal distributions of the state vectors at time $t$. One can show that under Assumption \ref{main_assmp}
\begin{align}\label{cauchy_tv}
\|\pi_{t+1}^\gamma - \hat{\pi}_{t+1}^\gamma\|_{TV}\leq \alpha\|\pi_{t+1}^\gamma - \hat{\pi}_{t+1}^\gamma\|_{TV}
\end{align}
where $\alpha=(1-P(B)\pi(\mathds{X})^N)<1$. In particular, one can use this relation to show that $\pi_t^\gamma$ is a Cauchy sequence, and thus converges to some $\pi^\gamma$ under the total variation metric, since $\P(\mathds{X}^N)$ is complete under total variation. The limit point $\pi^\gamma$ is the invariant measure of the vector state process under the stationary team policy $\gamma$. Furthermore, using (\ref{cauchy_tv}), one can also show that for any starting distribution of agents
\begin{align*}
\|\pi^\gamma_{t+1}-\pi^\gamma\|_{TV}\leq 2\alpha^t
\end{align*}
with $\alpha=(1-P(B)\pi(\mathds{X})^N)<1$. Therefore, Assumption \ref{main_assmp} implies the geometric ergodicity of the vector state process of the team.
\end{remark}

\subsection{Existence of a Solution to the ACOE and Optimal Policies under Continuity of Transition and Cost Functions}\label{weak_finite}
We have shown the existence of a solution to the ACOE and thus the existence of optimal stationary Markov policies under a minorization condition for the dynamics of the agents. However, we have also observed that this approach may fail when the number of agents is large. 
 
In this section, we will show that a solution to the ACOE can be established using the vanishing discount approach under the assumption that the transition and cost functions of the agents are Lipschitz continuous. Furthermore, the approach presented in this section will be valid for large populations, and in particular for the infinite population problems as we will see later.
\begin{assumption}\label{lip_reg}
\begin{itemize}
\item[i.] $\mathds{X}$ and $\mathds{U}$ are compact.
\item[ii.] $f$ is Lipschitz in $x,u,\mu$ such that
\begin{align*}
|f(x,u,\mu,w^i,w^0)-f(x',u',\mu',w^i,w^0)|\leq K_f \left(|x-x'|+|u-u'|+W_1(\mu,\mu')\right)
\end{align*} 
for some $2K_f<1$, uniformly in $w^i, w^0$ where $W_1$ is the first order Wasserstein distance.
\item[iii.] $c$ is  Lipschitz in $x,u,\mu$ such that
\begin{align*}
|c(x,u,\mu)-c(x',u',\mu')|\leq K_c \left(|x-x'|+|u-u'|+W_1(\mu,\mu')\right)
\end{align*}
for some $K_c<\infty$.
\end{itemize}
\end{assumption}

\begin{theorem}\label{van_disc_fin}
Under Assumption \ref{lip_reg}, there exists a constant $j^*$, and a function $h\in \B(\P_N(\mathds{X}))$, such that 
\begin{align*}
j^*+h(\mu)=\inf_{\Theta\in U(\mu)}\bigg(k(\mu,\Theta)+\int_{\P_N(\mathds{X})} h(\mu_1)\eta(d\mu_1|\mu,\Theta)\bigg).
\end{align*}
Furthermore, there exists a stationary and Markov policy $g$ which achieves the infimum at the right hand side.
\end{theorem}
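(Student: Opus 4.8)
The plan is to use the vanishing discount approach. For each discount factor $\beta\in(0,1)$, let $J_\beta^*$ denote the optimal $\beta$-discounted value function of the measure-valued MDP with state space $\P_N(\mathds{X})$, cost $k$, and kernel $\eta$. Since $\mathds{X}$ and $\mathds{U}$ are compact, $k$ is bounded and continuous, the action sets $U(\mu)$ are compact, and $\eta$ is weakly continuous in $(\mu,\Theta)$ (a consequence of the continuity of $f$, seen exactly as in the coupling estimate below); hence standard discounted MDP theory gives that $J_\beta^*\in\B(\P_N(\mathds{X}))$ is the unique bounded solution of the discounted Bellman equation, and that an optimal stationary selector exists. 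Writing $h_\beta(\mu):=J_\beta^*(\mu)-J_\beta^*(\mu_0)$ for a fixed reference $\mu_0$, this equation rearranges to
\[
(1-\beta)J_\beta^*(\mu_0)+h_\beta(\mu)=\inf_{\Theta\in U(\mu)}\Big(k(\mu,\Theta)+\beta\int_{\P_N(\mathds{X})}h_\beta(\mu_1)\,\eta(d\mu_1|\mu,\Theta)\Big).
\]

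The key step, which I expect to be the main obstacle, is to prove that $\{J_\beta^*\}_\beta$ is equi-Lipschitz in $W_1$ with a constant uniform in $\beta$. I would argue via a synchronous coupling of two copies of the underlying team problem (using its equivalence with the measure-valued MDP). Given $\mu=\mu_{\bf x}$ and $\mu'=\mu_{\bf x'}$, relabel agents so that $i\mapsto i$ is $W_1$-optimal, i.e.\ $\frac1N\sum_i|x^i-x'^i|=W_1(\mu,\mu')$. Taking an $\epsilon$-optimal policy for the $\mu$-system and driving the $\mu'$-system with the \emph{same} actions and the \emph{same} idiosyncratic and common noise realizations, the Lipschitz bound on $f$ gives, with $D_t:=\frac1N\sum_i|x^i_t-x'^i_t|$ and $W_1(\mu_{{\bf x}_t},\mu_{{\bf x'}_t})\le D_t$,
\[
D_{t+1}\le K_f\big(D_t+W_1(\mu_{{\bf x}_t},\mu_{{\bf x'}_t})\big)\le 2K_f\,D_t ,
\]
so that $D_t\le (2K_f)^t W_1(\mu,\mu')$. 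Here the hypothesis $2K_f<1$ is precisely what forces the coupled trajectories to contract geometrically. The Lipschitz bound on $c$ controls the per-stage cost gap by $|k(\mu_{{\bf x}_t},\Theta_t)-k(\mu_{{\bf x'}_t},\Theta'_t)|\le 2K_c D_t$, and summing the discounted differences yields
\[
|J_\beta^*(\mu)-J_\beta^*(\mu')|\le 2K_c\sum_{t=0}^\infty \beta^t(2K_f)^t\,W_1(\mu,\mu')\le \frac{2K_c}{1-2K_f}\,W_1(\mu,\mu'),
\]
a bound independent of $\beta$.

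With this uniform Lipschitz estimate the rest is routine. As $\mathds{X}$ is compact, $\P_N(\mathds{X})$ is compact in $W_1$, so the equi-Lipschitz, uniformly bounded (since $h_\beta(\mu_0)=0$, whence $\|h_\beta\|_\infty\le \tfrac{2K_c}{1-2K_f}\diam\P_N(\mathds{X})$) family $\{h_\beta\}$ is relatively compact in $C(\P_N(\mathds{X}))$ by Arzel\`a--Ascoli; moreover $|(1-\beta)J_\beta^*(\mu_0)|\le\|k\|_\infty<\infty$. I would pick $\beta_n\uparrow 1$ along which $h_{\beta_n}\to h$ uniformly and $(1-\beta_n)J^*_{\beta_n}(\mu_0)\to j^*$. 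Passing to the limit in the rearranged equation is immediate because
\[
\sup_{\mu,\Theta}\Big|\beta_n\!\int h_{\beta_n}\,d\eta-\int h\,d\eta\Big|\le \|h_{\beta_n}-h\|_\infty+(1-\beta_n)\|h\|_\infty\to 0,
\]
together with $|\inf F_n-\inf F|\le\sup|F_n-F|$; this produces the ACOE with $h$ Lipschitz (hence bounded and measurable). Finally, the optimal stationary Markov selector $g$ follows from a measurable selection theorem: for each $\mu$ the set $U(\mu)$ is compact, $\Theta\mapsto k(\mu,\Theta)+\int h\,d\eta$ is continuous (by continuity of $c$, weak continuity of $\eta$, and continuity of $h$), and $\mu\mapsto U(\mu)$ is a measurable compact-valued correspondence, so a measurable minimizer exists and is optimal by Theorem \ref{ver_thm}.
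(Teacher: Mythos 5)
Your proposal is correct and follows essentially the same route as the paper: vanishing discount, a $\beta$-uniform Lipschitz bound on the relative value function $h_\beta$ (which is where $2K_f<1$ enters), Arzel\`a--Ascoli on the compact space $(\P_N(\mathds{X}),W_1)$, passage to the limit in the rearranged Bellman equation, and a measurable selection argument. The only difference is cosmetic: the paper imports the estimate $|h_\beta^N(\mu)-h_\beta^N(\mu')|\le \frac{2K_c}{1-2K_f\beta}W_1(\mu,\mu')$ by citing an external lemma, whereas you reprove it via a synchronous coupling of the two $N$-agent systems, which is exactly the content of that lemma.
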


\begin{proof}
We start by analyzing the discounted cost criteria. Let
\begin{align*}
K_\beta^N(\mu,g)=\sum_{t=0}^{\infty}\beta^tE_{\mu}^g\left[k(\mu_t,\Theta_t)\right]
\end{align*}
be the discounted cost under some policy $g$ where $0<\beta<1$ is some discount factor. Furthermore, let $K_\beta^{N,*}(\mu)$ denote the optimal discounted cost function for the initial agent distribution $\mu\in\P_N(\mathds{X})$.

We define the following relative discounted value function, for some fixed $\mu'\in\P_N(\mathds{X})$:
\begin{align*}
h_\beta^N(\mu):=K^{N,*}_\beta(\mu)-K^{N,*}_\beta(\mu').
\end{align*}
Using the Bellman equation for $K^{N,*}_\beta(\mu)$ and by rearranging some terms, one can write that
\begin{align*}
K^{N,*}_\beta(\mu)-K^{N,*}_\beta(\mu')\\
=\inf_{\Theta\in U(\mu)}\bigg(k(\mu,\Theta)& + \beta \int\left( K_\beta^{N,*}(\mu_1)-K^{N,*}_\beta(\mu')\right)\eta(d\mu_1|\mu,\Theta)-(1-\beta)K^{N,*}_\beta(\mu')\bigg).
\end{align*}
Using the definition $h_\beta^N$ function, we can also write that
\begin{align}\label{rel_bell}
h_\beta^N(\mu)=\inf_{\Theta\in U(\mu)} \bigg(k(\mu,\Theta) + \beta  \int h_\beta^N(\mu_1)\eta(d\mu_1|\mu,\Theta)-(1-\beta)K^{N,*}_\beta(\mu')\bigg)
\end{align}
Next, we will show that $h_\beta^N$ is uniformly bounded and equicontinuous (over $\beta$) when we metrize  $\P_N(\mathds{X})$  with the first order Wasserstein distance. Thus we can invoke the Arzela-Ascoli Theorem since $\P_N(\mathds{X})$ is complete and separable under the Wasserstein metric.

For the boundedness:
\begin{align*}
\left|h_\beta^N(\mu)\right|=\left|K^{N,*}_\beta(\mu)-K^{N,*}_\beta(\mu')\right|\leq  \frac{2K_c}{1-2K_f\beta}W_1(\mu,\mu')
\end{align*}
where we used \cite[Lemma 4]{bayraktar2023finite} for the last step. Furthermore, since we assume that $\mathds{X}$ is compact (and bounded), $W_1(\mu,\mu')$ is uniformly bounded as well. Hence, $h_\beta^N(\mu)$ is uniformly bounded over $\beta$ when $2K_f<1$.

For equicontinuity, similarly, for some $\mu,\hat{\mu}$ we write
\begin{align}\label{fin_equi}
\left|h_\beta^N(\mu)-h_\beta^N(\hat{\mu})\right|=\left|K^{N,*}_\beta(\mu)-K^{N,*}_\beta(\mu')- K^{N,*}_\beta(\hat{\mu})+ K^{N,*}_\beta(\mu')\right|\leq  \frac{2K_c}{1-2K_f\beta}W_1(\mu,\hat{\mu})
\end{align}
which proves that $h_\beta^N(\mu)$ is equicontinuous over $\beta$ if $2K_f<1$, as the bound becomes uniform over $\beta$. 

Furthermore, the stage-wise cost function is uniformly bounded as $c$ is continuous and $\mathds{X}$ is compact. Hence, for the fixed $\mu'$, $(1-\beta)K^{N,*}_{\beta}(\mu')$ is a bounded sequence over $\beta$.  Thus, there exists a sequence of discount factors, say $\beta(k)\to 1$, such that, $(1-\beta(k))K_{\beta(k)}^{N,*}(\mu')\to j^*$ for some constant $j^*$. If we consider some other $\mu\in\P_N(\mathds{X})$, we can also show that
\begin{align*}
&|(1-\beta(k))K_{\beta(k)}^{N,*}(\mu)-j^*|\\
&\leq |(1-\beta(k))K_{\beta(k)}^{N,*}(\mu)-(1-\beta(k))K_{\beta(k)}^{N,*}(\mu')|+ |(1-\beta(k))K_{\beta(k)}^{N,*}(\mu')-j^*|\\
&\leq (1-\beta(k))\left|h^N_{\beta(k)}(\mu)\right| +|(1-\beta(k))K_{\beta(k)}^{N,*}(\mu')-j^*| \to 0
\end{align*}
where we used the boundedness of $h_\beta^N$ and the fact that along the sequence $\beta(k)$, $|(1-\beta(k))K_{\beta(k)}^{N,*}(\mu')-j^*| \to 0$. Hence, not only for the fixed $\mu'$ but for all $\mu\in\P_N(\mathds{X})$, we have that $|(1-\beta(k))K_{\beta(k)}^{N,*}(\mu)-j^*| \to 0$.

We have shown that $h_\beta^N$ is equicontinuous, hence there exists a further subsequence, say $\beta(k')$, such that $h^N_{\beta(k')}\to h$ uniformly, for some continuous function $h$, using the Arzela-Ascoli Theorem. If we take the limit of both sides in (\ref{rel_bell}) along $\beta(k')$, we can see
\begin{align*}
h(\mu)=\lim_{k' \to \infty}\inf_{\Theta\in U(\mu)} \bigg(k(\mu,\Theta) + \beta(k')  \int h_{\beta(k')}^N(\mu_1)\eta(d\mu_1|\mu,\Theta)\bigg) - j^*
\end{align*}

One can also justify the exchange of infimum and the limit using the equicontinuity of $h_\beta^N$, compactness of $\mathds{X}$, and thus the weakly pre-compactness of $\P_N(\mathds{X})$, see \cite[Theorem 7.3.3]{yuksel2020control}, once we exchange the limit and the infimum, we get
\begin{align*}
h(\mu)=\inf_{\Theta\in U(\mu)} \bigg(k(\mu,\Theta) +  \int h(\mu_1)\eta(d\mu_1|\mu,\Theta)\bigg) - j^*
\end{align*}
which proves the first part. The second part follows as the measurable selection conditions are satisfied under Assumption \ref{lip_reg}, since the limit function $h$, and the cost function $k$ are continuous, $\eta$ is weakly continuous and the action set $U(\mu)$ is compact and lower continuous.
\end{proof}

%Theorem \ref{van_disc_fin}, and Theorem \ref{ver_thm} imply that under Assumption \ref{main_assmp}, there exists a stationary and Markov optimal policy for finite population problem under the infinite horizon average cost criteria.
\adk{
The proof method used for the previous result reveals an important implication. Along any sequence (not just a subsequence) of vanishing discount factors, a normalized version of the discounted optimal value function converges to the average cost optimal value function.
\begin{proposition}\label{van_val}
Under Assumption \ref{lip_reg}, we have that for any $\mu\in\P_N(\mathds{X})$
\begin{align*}
\lim_{\beta\to 1} (1-\beta) K_\beta^{N,*}(\mu) = K_\infty^{N,*}(\mu)=j^*.
\end{align*}
\end{proposition}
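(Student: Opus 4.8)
The plan is to upgrade the subsequential convergence already obtained in the proof of Theorem~\ref{van_disc_fin} to convergence along the full net $\beta\to 1$, and then to identify the common limit with the average cost optimal value via the verification theorem. First I would recall the facts established in that proof that hold uniformly over $\beta\in(0,1)$: the relative discounted value functions $h_\beta^N(\mu)=K_\beta^{N,*}(\mu)-K_\beta^{N,*}(\mu')$ are uniformly bounded and equicontinuous in the Wasserstein metric, and the scalar sequence $(1-\beta)K_\beta^{N,*}(\mu')$ is bounded. Moreover, the pair $(j^*,h)$ produced there solves the ACOE, so Theorem~\ref{ver_thm} already gives $j^*=K_\infty^{N,*}(\mu)$ for every $\mu$; thus it only remains to prove that the full limit of $(1-\beta)K_\beta^{N,*}(\mu)$ exists and equals $j^*$.

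Second, I would reduce the statement to the single reference point $\mu'$. For any sequence $\beta_n\to 1$ one has
\begin{align*}
\left|(1-\beta_n)K_{\beta_n}^{N,*}(\mu)-(1-\beta_n)K_{\beta_n}^{N,*}(\mu')\right|\leq (1-\beta_n)\,\|h_{\beta_n}^N\|_\infty\to 0,
\end{align*}
because $h_\beta^N$ is uniformly bounded. Hence $(1-\beta)K_\beta^{N,*}(\mu)$ and $(1-\beta)K_\beta^{N,*}(\mu')$ share the same limiting behaviour for every $\mu$, and it suffices to control the reference sequence.

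Third, since $(1-\beta)K_\beta^{N,*}(\mu')$ is bounded, establishing the full limit is equivalent to showing that every subsequential limit equals $j^*$. Given any subsequence along which $(1-\beta_n)K_{\beta_n}^{N,*}(\mu')\to\tilde{j}$, I would apply the Arzela-Ascoli Theorem (using equicontinuity and uniform boundedness of $h_\beta^N$, together with the weak precompactness of $\P_N(\mathds{X})$ under compactness of $\mathds{X}$) to extract a further subsequence along which $h_{\beta_n}^N\to\tilde{h}$ uniformly. Passing to the limit in the relative Bellman equation (\ref{rel_bell}) exactly as in the proof of Theorem~\ref{van_disc_fin}, with the exchange of limit and infimum justified by the same equicontinuity and compactness argument, shows that $(\tilde{j},\tilde{h})$ is again a solution of the ACOE. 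By the verification theorem its constant must equal the optimal average cost, so $\tilde{j}=K_\infty^{N,*}(\mu')=j^*$.

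Finally, since every subsequential limit of the bounded sequence $(1-\beta)K_\beta^{N,*}(\mu')$ equals $j^*$, the full limit exists and equals $j^*$; combined with the reduction of the second step, $\lim_{\beta\to1}(1-\beta)K_\beta^{N,*}(\mu)=j^*=K_\infty^{N,*}(\mu)$ for every $\mu$. I expect the main obstacle to be the uniqueness of the ACOE constant: a priori distinct approximating discount sequences could converge to distinct constants, and the crucial point preventing this is that the verification theorem pins every ACOE constant to the single scalar $K_\infty^{N,*}$, which is independent of the approximating sequence. The limit-exchange in the Bellman equation is routine given the machinery already assembled in Theorem~\ref{van_disc_fin}.
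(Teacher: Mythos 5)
Your proposal is correct and follows essentially the same route as the paper's proof: extract, from an arbitrary sequence $\beta_n\to 1$, a further subsequence along which the normalized value converges to an ACOE constant, and then invoke the uniqueness of the average cost optimal value (via Theorem~\ref{ver_thm}) to conclude that every subsequential limit is $j^*$, hence the full limit exists. You merely spell out more explicitly the reduction to the reference point $\mu'$ and the Arzela--Ascoli/limit-exchange steps that the paper's two-line argument leaves implicit.
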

\begin{proof}
Note that we know from the proof of Theorem \ref{van_disc_fin} that for any $\beta\to 1$, there exists a subsequence $\beta(k)\to 1$ such that 
\begin{align*}
\lim_{\beta(k)\to 1} (1-\beta(k)) K_\beta^{N,*}(\mu) =j^*
\end{align*}
where $j^*$ is the optimal value function for the average cost infinite horizon problem. 
The result then follows, since $j^*$, i.e. the optimal value function, is unique under Assumption \ref{lip_reg} and thus any convergent subsequence should converge to the same value.
\end{proof}}

\section{Ergodic Control of Mean Field Limit Problem}\label{inf_problem}
In Section \ref{meas_valued_sec}, we have focused on solutions to the problem of a finite population team, for which the resulting policy is a mapping $g:\P_N(\mathds{X})\to \P_N(\mathds{U}\times\mathds{X})$. In other words, the policy provides a distribution rule for the joint state-action of the agents, by looking at the state distribution of the agents. Note that this policy is a recipe for the team, however, to apply this policy at the agent level, one needs to coordinate the agents in order to realize the recipe state-action distribution. For example, if the distribution of the agents is given by $\mu\in\P_N(\mathds{X})$, and the optimal action is given by $\Theta(du,dx)\in\P_N(\mathds{U}\times\mathds{X})$, then $\Theta(du,dx)$ serves as a recipe for the team of agents. The agents need to coordinate to create this joint distribution among each other.
 In particular, every agent may be required to apply different, asymmetric policies, even if their states are the same, i.e. the agent level policies might depend on the identity of the agents. It might be the case for example, two agents $i\neq j$ with same state $x^i=x^j$, might need to apply different policies $\gamma^i(\cdot|x^i,\mu_{\bf x})\neq \gamma^j(\cdot|x^j,\mu_{\bf x})$, see Example \ref{counter_ex}. Asymmetric policies, especially for the large number of agents, can be hard to coordinate. 

 To deal with this coordination challenge, the usual approach is to consider the infinite population problem. The solution of the infinite population problem which we will introduce in this section, provides a policy, say $g:\P(\mathds{X})\to \P(\mathds{U}\times\mathds{X})$, that maps the marginal distribution of a representative agent to a joint state-action measure. For example, if the state is $\mu\in\P(\mathds{X})$, and the recipe action is some $\Theta(du,dx)\in \P(\mathds{U}\times\mathds{X})$, then by disintegrating, $\Theta(du,dx)=\gamma(du|x,\mu)\mu(dx)$, every agent can apply the agent level symmetric policy $\gamma(\cdot|x,\mu)$, which solves the coordination challenge.

In this section, we consider the control problem for the infinite population of agents, i.e. for $N\to \infty$. We will first provide a control problem for a single agent, and formulate it as a controlled measure valued Markov process.

%Let $\{x_t\}\subset\mathds{X}$ denote the state process of an agent, whose dynamics are given by 
%\begin{align*}
%x_{t+1}=f(x_t,u_t,,\mu_t,w_t,w_t^0)
%\end{align*}  
%where $u_t$ is the control action at time $t$, $\mu_t\in \P(\mathds{X})$ is the marginal distribution of the agent at time $t$, and  $w_t$ and $w_t^0$ are i.i.d. noise processes. To choose the control actions, at any time $t$, the agent can use the information variables
%\begin{align*}
%I_t=\{x_0,\dots,x_t,u_0,\dots,u_t,\mu_0,\dots,\mu_t\}.
%\end{align*}
%A policy $\gamma=\{\gamma_t\}_t$ will be called admissible, if $\gamma_t$ is measurable with respect to the $\sigma$- algebra implied by the information variables at time $t$ where $\gamma_t$ can be a randomized policy as well.

%The agent aims to minimize the following cost function
%\begin{align*}
%J_\infty (\mu,\gamma)=\limsup_{T\to\infty}\frac{1}{T}\sum_{t=0}^{T-1}E_{x_0}^{\gamma}\left[\int c(x,u_t)\mu_t(dx)\right]
%\end{align*}
%for $\mu_0=\mu$.

\subsection{Infinite Population Measure Valued Control Problem}

We present a control problem for the measure process $\mu_t$, by changing the control variables as well. We will let the control actions be the joint measures on $\mathds{U}\times\mathds{X}$. Let $\Theta_t\in\P(\mathds{U}\times\mathds{X})$ such that the first marginal of $\Theta_t$ agrees with the state distribution of the agent $\mu_t$. We define the stage-wise cost function $k(\mu_t,\Theta_t)$ similar to the finite population measure valued MDP construction so that 
\begin{align*}
k(\mu_t,\Theta_t):=\int c(x,u,\mu_t)\Theta(du,dx).
\end{align*} 
An admissible policy $g=\{g_t\}$ is a sequence of control functions such that each can use the information variables $\{\mu_0,\dots,\mu_t,\Theta_0,\dots,\Theta_{t-1}\}$. Let $G$ be the set of all admissible policies for the measure valued process.

Recall the transition kernel $\mathcal{T}^{w^0}(\cdot|x,u,\mu)$ defined in (\ref{kernel_common_noise}), which defines a probability measure on $\mathds{X}$ given the last state, action, and measure variables $(x,u,\mu)$ and the common noise $w^0$. We can define the dynamics of the measure process $\mu_t$ using $\mathcal{T}^{w^0}$ so that
\begin{align}\label{inf_pop_meas_flow}
\mu_{t+1}(\cdot)=F(\mu_t,\Theta_t,w^0):=\int \mathcal{T}^{w^0}(\cdot|x,u,\mu_t)\Theta(dx,du).
\end{align}
Note that the dynamics can also be represented using a Markov kernel, which we will denote by $$\eta(d\mu_{t+1}|\mu_t,\Theta_t)$$ 
by an abuse of notation; recall that we have used the same notation to denote the transition kernel of the finite population measure valued MDP (see (\ref{trans_measure})). Even though these transition kernels are totally different, we use the same notation for notation simplicity.

We define the cost function to be minimized as
\begin{align*}
K_\infty(\mu,g)=\limsup_{T\to\infty}\frac{1}{T}\sum_{t=0}^{T-1}E_{\mu}^g\left[k(\mu_t,\Theta_t)\right]
\end{align*}
where $\mu=\mu_0$.

We denote the optimal cost function by $K^*_\infty(\mu)$ such that
\begin{align*}
K^*_\infty(\mu)=\inf_{g\in G}K_\infty(\mu,g).
\end{align*}

\begin{remark}\label{caution_remark}
We note that even though the infinite population problem can be structured as a classical single agent MDP problem, the standard tools used to study the average cost optimality criteria for MDPs may not be directly applicable. The standard methods (see e.g. \cite{HernandezLermaMCP,hernandezlasserre1999further,survey}) requires certain ergodicity conditions on the controlled state process, as well as mixing type conditions of the state process under total variation norm, e.g. conditions similar to Assumption \ref{main_assmp}. These conditions may be easy to verify for state variables living in general Euclidean state  spaces, e.g. for the finite population problem as we have seen in Section \ref{fin_rel_op}. However, for the measure valued controlled process we have introduced in this section for the infinite population problem, these conditions are not applicable unless we assume very strong conditions on the system dynamics.  For example, a slightly weaker version of Assumption \ref{main_assmp} requires that
\begin{align}\label{erg_cont}
\|\eta(\cdot|\mu,\Theta)-\eta(\cdot|\mu',\Theta')\|_{TV}\leq 2\alpha
\end{align}
for some $\alpha<1$.
If we assume for simplicity that there is no common noise and consider the infinite population measure valued process, this inequality will always fail, since the process becomes deterministic, and in particular we get $\|\eta(\cdot|\mu,\Theta)-\eta(\cdot|\mu',\Theta')\|_{TV} =2$
for any $\mu\neq\mu'$ and  $\Theta\neq\Theta'$. In general, most of the tools developed to study MDPs under average cost criteria may not be applicable for deterministic systems.

With the presence of common noise, the dynamics becomes stochastic again, however, verifying the standard average cost criteria assumptions is not immediate. For example, as we have observed in Remark \ref{inf_fails}, in order to verify (\ref{erg_cont}) for infinite population dynamics, a sufficient condition is that there exists a measurable set $B$ with positive probability, $P(B)>0$, such that for any $x,u,\mu$ we have 
\begin{align*}
\mathcal{T}^{w^0}(\cdot|x,u,\mu)=\pi(\cdot)
\end{align*}
for all $w^0\in B$, where $\pi$ is a probability measure. In words, there exists a set of common noise realizations which resets the measure flow to some $\pi(\cdot)\in\P(\mathds{X})$. Obviously, this is a very restrictive assumption, and thus, in what follows, we use different tools to study the average cost optimality. %as we have used in Section \ref{weak_finite} and \ref{weak_rel_op_finite}.
\end{remark}

\subsection{Existence of a Solution to the ACOE and Optimal Policies under Continuity of Transition and Cost Functions}\label{weak_inf}

We will first analyze the optimality problem, and establish the existence of stationary Markov optimal policies for the infinite population problem as in the finite population problem. We follow the same steps in Section \ref{weak_finite} and establish a result parallel to Theorem \ref{van_disc_fin}. As before, we will use the vanishing discount approach. Let
\begin{align*}
K_\beta(\mu,g)=\sum_{t=0}^{\infty}\beta^tE_{\mu}^g\left[k(\mu_t,\Theta_t)\right]
\end{align*}
where $0<\beta<1$ is some discount factor. Furthermore, let $K_\beta^*(\mu)$ denote the optimal discounted cost function.

We define the following relative discounted value function, for some fixed $\mu'\in\P(\mathds{X})$:
\begin{align*}
h_\beta(\mu):=K^*_\beta(\mu)-K^*_\beta(\mu').
\end{align*}

%We follow the same steps in Section \ref{weak_finite}. The following result is parallel to Theorem \ref{van_disc_fin}.

\begin{theorem}\label{van_disc_inf}
Under Assumption \ref{lip_reg}, there exists a constant $j^*$, and a function $h\in \B(\P_N(\mathds{X}))$, such that 
\begin{align*}
j^*+h(\mu)=\inf_{\Theta\in U(\mu)}\bigg(k(\mu,\Theta)+\int_{\P(\mathds{X})} h(\mu_1)\eta(d\mu_1|\mu,\Theta)\bigg).
\end{align*}
Furthermore, there exists a stationary and Markov policy $g$ which achieves the infimum at the right hand side.
\end{theorem}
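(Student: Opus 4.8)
The plan is to repeat the vanishing discount argument of Theorem \ref{van_disc_fin}, now for the infinite population measure valued MDP with state space $\P(\mathds{X})$, transition kernel $\eta$ induced by the flow map $F$ of (\ref{inf_pop_meas_flow}), and cost $k$. First I would fix the reference measure $\mu'\in\P(\mathds{X})$, introduce the relative discounted value function $h_\beta(\mu)=K_\beta^*(\mu)-K_\beta^*(\mu')$, and rewrite the discounted Bellman equation in relative form, exactly as in (\ref{rel_bell}):
\begin{align*}
h_\beta(\mu)=\inf_{\Theta\in U(\mu)}\bigg(k(\mu,\Theta)+\beta\int h_\beta(\mu_1)\eta(d\mu_1|\mu,\Theta)-(1-\beta)K_\beta^*(\mu')\bigg).
\end{align*}

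The crux of the argument, and the step I expect to be the main obstacle, is the Wasserstein Lipschitz bound
\begin{align*}
\left|K_\beta^*(\mu)-K_\beta^*(\mu')\right|\leq\frac{2K_c}{1-2K_f\beta}\,W_1(\mu,\mu'),
\end{align*}
uniformly in $\beta$, the infinite population analogue of the estimate taken from \cite[Lemma 4]{bayraktar2023finite} in the finite case. I would prove it by a coupling argument: given a near-optimal policy for the flow started at $\mu'$, I construct a mimicking policy for the flow started at $\mu$ by transporting the joint actions along optimal couplings, and I control the one step growth of $W_1(\mu_t,\mu_t')$ via the Lipschitz continuity of $f$ in $(x,u,\mu)$ from Assumption \ref{lip_reg}(ii). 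The delicate point is that the control variable is itself a joint law on $\mathds{U}\times\mathds{X}$, so producing an admissible action for the second flow requires coupling the disintegrated kernels with care; the hypothesis $2K_f<1$ is precisely what renders the induced map on measures a strict $W_1$-contraction, so the accumulated stage cost differences form a convergent geometric series summing to $\frac{2K_c}{1-2K_f\beta}W_1(\mu,\mu')$. Since $\mathds{X}$ is compact, $W_1$ is bounded on $\P(\mathds{X})$, so this simultaneously yields uniform boundedness and equicontinuity of $\{h_\beta\}$ over $\beta$.

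With these properties the remainder mirrors the finite population proof. As $k$ is bounded (continuity of $c$ and compactness of $\mathds{X}$), the scalars $(1-\beta)K_\beta^*(\mu')$ are bounded, so I extract a subsequence $\beta(k)\to 1$ with $(1-\beta(k))K_{\beta(k)}^*(\mu')\to j^*$, and the uniform boundedness of $h_\beta$ shows the same limit $j^*$ is attained for every $\mu$. Because $\mathds{X}$ is compact, $\P(\mathds{X})$ is compact under $W_1$, so Arzela-Ascoli furnishes a further subsequence $\beta(k')\to 1$ along which $h_{\beta(k')}\to h$ uniformly for some continuous $h$. Passing to the limit in the relative Bellman equation then produces the ACOE
\begin{align*}
j^*+h(\mu)=\inf_{\Theta\in U(\mu)}\bigg(k(\mu,\Theta)+\int_{\P(\mathds{X})} h(\mu_1)\eta(d\mu_1|\mu,\Theta)\bigg).
\end{align*}

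To justify exchanging the infimum with the limit I would invoke the equicontinuity of $\{h_\beta\}$ together with the weak continuity of $\eta$ and the compactness and lower semicontinuity of $U(\mu)$, as in \cite[Theorem 7.3.3]{yuksel2020control}; weak continuity of $\eta$ itself follows from continuity of the flow map $F(\mu,\Theta,w^0)=\int\mathcal{T}^{w^0}(\cdot|x,u,\mu)\Theta(dx,du)$ in $(\mu,\Theta)$, inherited from the Lipschitz continuity of $f$ together with dominated convergence in $w^0$. Finally, the existence of a stationary Markov minimizer follows from the standard measurable selection theorem, whose hypotheses hold here since $h$ and $k$ are continuous, $\eta$ is weakly continuous, and the admissible action correspondence $\mu\mapsto U(\mu)$ is compact valued and lower semicontinuous.
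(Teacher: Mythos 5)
Your proposal is correct, and the overall architecture (relative discounted value function, uniform boundedness and equicontinuity, Arzel\`a--Ascoli, passage to the limit in the relative Bellman equation, measurable selection) is the same as the paper's. The genuine difference is in how the key estimate $|K_\beta^*(\mu)-K_\beta^*(\mu')|\leq \frac{2K_c}{1-2K_f\beta}W_1(\mu,\mu')$ is obtained. You prove it \emph{directly} on the infinite population flow by a coupling argument: transport a near-optimal policy from the flow started at $\mu'$ to the flow started at $\mu$, use the Lipschitz continuity of $f$ to show the induced map on measures contracts $W_1$ at rate $2K_f<1$, and sum the geometric series of stage-cost differences. The paper instead \emph{inherits} the bound from the finite population problem: it inserts $K_\beta^{N,*}(\mu^N)$ and $K_\beta^{N,*}(\mu^{N'})$ for approximating empirical measures, bounds the middle term by the already-established finite-$N$ estimate (\ref{fin_equi}) (itself imported from \cite[Lemma 4]{bayraktar2023finite}), and kills the two outer terms using the convergence $K_\beta^{N,*}(\mu^N)\to K_\beta^*(\mu)$ cited from \cite{bauerle2021mean,motte2022mean}. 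Your route is more self-contained --- it does not rely on the external $N\to\infty$ convergence results --- at the price of having to carry out the coupling of disintegrated kernels carefully for measure-valued controls (which is essentially a rerun of the finite-population Lipschitz lemma in the mean-field limit); the paper's route is shorter on the page but leans on two external references. Both yield the identical constant, and the remainder of your argument, including the use of compactness of $\P(\mathds{X})$ under $W_1$ and the continuity/lower semicontinuity conditions for the measurable selection, matches the paper.
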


\begin{proof}
As in the proof of Theorem \ref{van_disc_fin}, we define the relative discounted value function for some fixed $\mu'\in\P(\mathds{X})$:
\begin{align*}
h_\beta(\mu)=K_\beta^*(\mu)-K_\beta^*(\mu').
\end{align*}
A careful look at the proof of Theorem \ref{van_disc_fin} shows that it is sufficient to show $h_\beta$ is uniformly bounded and equicontinuous over $\beta\in(0,1)$.

For $h_\beta$, we have that
\begin{align}\label{inf_equi}
\left|h_\beta(\mu)\right|=\left|K_\beta^*(\mu)-K_\beta^*(\mu')\right|\leq& \lim_{N\to\infty}\left|K_\beta^*(\mu)-K_\beta^{N,*}(\mu^N)\right|+ \left|K_\beta^{N,*}(\mu^N) - K_\beta^{N,*}(\mu^{N'})\right|\nonumber\\
& +  \left|K_\beta^{N,*}(\mu^{N'}) - K_\beta^*(\mu')\right|\nonumber\\
&\leq \frac{2K_c}{1-2K_f\beta}W_1(\mu,\mu')
\end{align}
where $\mu^N$ and $\mu^{N'}$ are chosen such that $\mu^N\to\mu$ and $\mu^{N'}\to\mu'$ weakly. Taking the limit $N\to\infty$, the first and last terms go to $0$ by \cite{bauerle2021mean,motte2022mean} under Assumption \ref{lip_reg}, and the middle term is bounded by $ \frac{2K_c}{1-2K_f\beta}W_1(\mu^N,\mu^{N'})$ as we have shown in (\ref{fin_equi}). Hence, we have uniform boundedness together with compactness of $\mathds{X}$. For the equicontinuity one can follow the same steps.

For the second part of the result, we have that the measurable selections conditions holds under Assumption \ref{lip_reg}, as the cost function $k(\mu,\Theta)$ is continuous in both variables, and the transition kernel $\eta(\cdot|\mu,\Theta)$ is weakly continuous in both variables, furthermore $U(\mu)$ is compact and lower-semi continuous. Therefore, the proof is complete.
\end{proof}

\adk{
We now present a result, similar to Proposition \ref{van_val}. The result is about the limit of normalized discounted optimal value functions. The proof is identical to the proof of  Proposition \ref{van_val} and follows from the uniqueness of the optimal value function for the average cost criteria.
\begin{proposition}\label{van_disc_seq}
Under Assumption \ref{lip_reg}, we have that for any $\mu\in\P(\mathds{X})$
\begin{align*}
\lim_{\beta\to 1} (1-\beta) K_\beta^{*}(\mu) = K_\infty^{*}(\mu)=j^*.
\end{align*}
\end{proposition}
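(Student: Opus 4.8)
The plan is to mirror the proof of Proposition \ref{van_val}, replacing the finite-population value functions $K_\beta^{N,*}$ by their infinite-population counterparts $K_\beta^*$ and the state space $\P_N(\mathds{X})$ by $\P(\mathds{X})$. The only genuinely new ingredient needed beyond the proof of Theorem \ref{van_disc_inf} is the uniqueness of the average-cost value, which upgrades convergence along a subsequence to convergence of the full net $\beta\to 1$.

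First I would record two facts already contained in the proof of Theorem \ref{van_disc_inf}. Since $c$ is continuous and $\mathds{X}$ is compact, the stage cost $k$ is bounded, so $0\le (1-\beta)K_\beta^*(\mu)\le \|k\|_\infty$ uniformly in $\beta$ and $\mu$; in particular the quantity $(1-\beta)K_\beta^*(\mu)$ stays in a compact interval as $\beta\to 1$. Second, the relative value functions $h_\beta=K_\beta^*(\cdot)-K_\beta^*(\mu')$ are uniformly bounded over $\beta$ by \eqref{inf_equi}, so that $(1-\beta)h_\beta(\mu)\to 0$ and hence
\begin{align*}
(1-\beta)K_\beta^*(\mu)-(1-\beta)K_\beta^*(\mu')=(1-\beta)h_\beta(\mu)\longrightarrow 0
\end{align*}
as $\beta\to 1$, for every $\mu\in\P(\mathds{X})$. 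Consequently, whenever $(1-\beta)K_\beta^*$ converges along a sequence at one point, it converges to the same limit at every point.

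Next I would run the standard subsequence argument. Take any sequence $\beta_n\to 1$. By boundedness, any of its subsequences admits a further subsequence $\beta_{n_k}$ along which $(1-\beta_{n_k})K_{\beta_{n_k}}^*(\mu')$ converges to some limit $L$. Along $\beta_{n_k}$ the vanishing-discount construction of Theorem \ref{van_disc_inf} applies verbatim --- extract a yet finer subsequence so that, by equicontinuity and Arzela--Ascoli, $h_{\beta_{n_k}}\to h$ uniformly --- and produces a solution $(L,h)$ of the ACOE. By the verification theorem (Theorem \ref{ver_thm}(i)) the constant in any ACOE solution equals the optimal average cost, so $L=K_\infty^*(\mu)=j^*$. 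Thus every subsequence of $(1-\beta_n)K_{\beta_n}^*(\mu')$ has a further subsequence converging to $j^*$, which forces $(1-\beta_n)K_{\beta_n}^*(\mu')\to j^*$; combined with the displayed limit above, $(1-\beta)K_\beta^*(\mu)\to j^*$ for every $\mu$.

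The routine parts are the boundedness and equicontinuity estimates, which are inherited directly from Theorem \ref{van_disc_inf}. The one step that deserves care --- and the crux of the argument --- is the claim that every convergent subsequence of $(1-\beta)K_\beta^*$ has the same limit; this is exactly where uniqueness of $j^*$ enters, via the fact that the verification theorem pins the ACOE constant to the single value $K_\infty^*$ independently of the subsequence chosen. I do not anticipate a serious obstacle here, since all the analytic content has already been established; the proposition is essentially a soft consequence of Theorem \ref{van_disc_inf} together with Theorem \ref{ver_thm}.
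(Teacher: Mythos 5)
Your proof is correct and follows essentially the same route as the paper: the paper simply notes that the argument is identical to Proposition \ref{van_val}, i.e.\ extract along any sequence $\beta\to 1$ a subsequence for which the vanishing-discount construction of Theorem \ref{van_disc_inf} yields an ACOE solution whose constant must equal the unique optimal average cost $j^*$ by Theorem \ref{ver_thm}, so every subsequential limit coincides and the full limit exists. Your writeup merely spells out the boundedness, equicontinuity, and point-independence steps that the paper leaves implicit.
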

}

\section{Limit Theorems for $N\to\infty$}\label{N_to_inf}
Finally, in this section, we study the relation between the finite population control and the infinite population control. In particular, we will show that the value function of the $N$- population problem converges to the value function of the infinite population problem. We will show that the accumulation points of the optimal state-action distributions for the $N$-population problem are optimal state-action distribution for the infinite population problem. Furthermore, we will establish that one can symmetrically use policies designed for the infinite population problem for the finite population control with near optimality if the population is sufficiently large.

\subsection{Convergence of Value Functions as $N\to \infty$}\label{val_conv_inf}

%The following example provides a counter case where the value functions for $N$-population converges for $T$-horizon optimality criteria, however, the convergence fails for ergodic cost criteria:
%\begin{example}
%We assume a control-free deterministic model without loss of generality, where the dynamics are given by
%\begin{align*}
%x_{t+1}=f(x_t,\mu_t) = a\left(x_t +E_{\mu_t}[X]\right)
%\end{align*}
%and the cost is given by 
%\begin{align*}
%c(x,u,\mu)=x.
%\end{align*}
%Let the initial distribution be given by
%\begin{align*}
%\mu_0 =\frac{1}{2}\delta_0 + \frac{1}{2}\delta_1.
%\end{align*}
%One can show that the deterministic measure flow is of the following form
%\begin{align*}
%\mu_t=\frac{1}{2}\delta_{y_t} + \frac{1}{2}\delta_{y_t + a^t }
%\end{align*}
%where $y_t$ satisfies 
%\begin{align}\label{y_dyn}
%y_{t+1} = 2ay_t + \frac{a^{t+1}}{2}.
%\end{align}
%One can check that independent of the value of $a$, as $\mu^N \to \mu$
%\begin{align*}
%K_T^N(\mu^N)\to K_T^N(\mu)
%\end{align*}
%for every $T<\infty$. 

%On the other hand, for any value of $N$, there is a stritly positive probability event where $\mu^N=\delta_0$. Then it is clear that $\mu_t^N=\delta_0$ for all $t$, and thus we have
%\begin{align*}
%K_\infty^N(\mu_0^N)=\lim_{T\to\infty}\frac{1}{T}\sum_{t=0}^{T-1} \int c(x)\mu_t^N(dx) = 0.
%\end{align*}
%However, for the infinite population limit
%\end{example}

\begin{theorem}\label{conv_thm}
Under Assumption \ref{lip_reg}
\begin{align*}
\lim_{N\to\infty}K_\infty^{N,*}(\mu^N)=K_\infty^*(\mu)
\end{align*}

\end{theorem}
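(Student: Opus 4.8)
The plan is to sandwich each ergodic value between its normalized discounted counterparts and then interchange the two limits $N\to\infty$ and $\beta\to1$. First I would record that under Assumption \ref{lip_reg} both ergodic optimal values are constant in the initial measure: for the finite population problem Theorem \ref{van_disc_fin} together with the verification Theorem \ref{ver_thm}(i) gives $K_\infty^{N,*}(\cdot)\equiv j^N$, while for the limit problem Theorem \ref{van_disc_inf} and Proposition \ref{van_disc_seq} give $K_\infty^*(\cdot)\equiv j$. Taking $\mu^N\in\P_N(\mathds{X})$ with $\mu^N\to\mu$ in $W_1$, Propositions \ref{van_val} and \ref{van_disc_seq} identify these constants as vanishing-discount limits, $j^N=\lim_{\beta\to1}(1-\beta)K_\beta^{N,*}(\mu^N)$ and $j=\lim_{\beta\to1}(1-\beta)K_\beta^*(\mu)$. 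For any fixed $\beta\in(0,1)$ I would then split
\begin{align*}
|j^N-j| &\le \big|j^N-(1-\beta)K_\beta^{N,*}(\mu^N)\big| + (1-\beta)\big|K_\beta^{N,*}(\mu^N)-K_\beta^*(\mu)\big| \\
&\quad + \big|(1-\beta)K_\beta^*(\mu)-j\big|.
\end{align*}
The central term is handled, for each fixed $\beta$, by the discounted value convergence $K_\beta^{N,*}(\mu^N)\to K_\beta^*(\mu)$ as $N\to\infty$ — the same fact, borrowed from \cite{bauerle2021mean,motte2022mean} under Assumption \ref{lip_reg}, that was already used to bound the outer terms in (\ref{inf_equi}).

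The crux, and the step I expect to be the main obstacle, is to bound the first and third terms \emph{uniformly in $N$}: I need a rate of vanishing-discount convergence that does not deteriorate as the population grows, so that the interchange of the $N$- and $\beta$-limits is legitimate. The key observation enabling this is that the relative value functions admit an $N$-independent bound. Indeed, by \cite[Lemma 4]{bayraktar2023finite} one has $|h_\beta^N(\nu)|\le \frac{2K_c}{1-2K_f\beta}W_1(\nu,\mu')\le M$ with $M:=\frac{2K_c\,\diam(\mathds{X})}{1-2K_f}$, using compactness of $\mathds{X}$ and the margin $2K_f<1$; passing to the limit along the subsequence that produces the ACOE shows the solutions $h^N$ and $h$ satisfy $\|h^N\|_\infty,\|h\|_\infty\le M$ with the \emph{same} constant $M$.

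Given a solution $(j,h)$ of the ACOE with $\|h\|_\infty\le M$, I would insert the candidate $w(\mu):=\frac{j}{1-\beta}+h(\mu)$ into the discounted Bellman operator $T_\beta$. Writing $\beta\int h\,d\eta=\int h\,d\eta-(1-\beta)\int h\,d\eta$ and invoking the ACOE, one gets $\|T_\beta w-w\|_\infty\le(1-\beta)M$; since $T_\beta$ is a $\beta$-contraction in the sup norm with fixed point $K_\beta^*$, the approximate fixed point bound yields $\|K_\beta^*-w\|_\infty\le M$, hence
\begin{align*}
\big|(1-\beta)K_\beta^*(\mu)-j\big|\le (1-\beta)\|h\|_\infty+(1-\beta)M\le 2(1-\beta)M.
\end{align*}
The identical computation for each finite $N$ gives $|(1-\beta)K_\beta^{N,*}(\mu^N)-j^N|\le 2(1-\beta)M$ with the same $M$, which is exactly the uniformity required.

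Combining the three pieces, for every $\beta$ the first and third terms are at most $2(1-\beta)M$ while the middle term tends to $0$ as $N\to\infty$. Given $\epsilon>0$, I would first choose $\beta$ close enough to $1$ that $4(1-\beta)M<\epsilon/2$, and then $N$ large enough that the middle term falls below $\epsilon/2$; this gives $\limsup_{N\to\infty}|j^N-j|\le\epsilon$, and since $\epsilon$ is arbitrary the claim follows. Thus the only delicate point is the interchange of limits, and it is resolved entirely by the $N$-independent constant $M$, whose existence rests on the contraction margin $2K_f<1$ in Assumption \ref{lip_reg}.
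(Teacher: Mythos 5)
Your proposal is correct, and it reaches the conclusion by a genuinely different route than the paper. The paper iterates the ACOE $T$ times to sandwich $j^N$ and $j$ between finite-horizon averages, obtaining $\bigl|\frac{1}{T}K_T^{N,*}(\mu^N)-j^N\bigr|\le \frac{2M}{T}$ and its infinite-population analogue with the same $N$-independent constant $M$, and then lets $N\to\infty$ at fixed $T$ using the convergence of finite-horizon optimal values from \cite{bauerle2021mean,motte2022mean}. You instead sandwich $j^N$ and $j$ between normalized discounted values: your approximate-fixed-point computation, inserting $w=\frac{j}{1-\beta}+h$ into the $\beta$-contraction $T_\beta$ and using $\|T_\beta w-w\|_\infty\le(1-\beta)M$, yields $\bigl|(1-\beta)K_\beta^{N,*}(\mu^N)-j^N\bigr|\le 2(1-\beta)M$ uniformly in $N$, and you then let $N\to\infty$ at fixed $\beta$ using the convergence of discounted optimal values. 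Both arguments hinge on exactly the same $N$-uniform bound $M$ on the relative value functions coming from \cite[Lemma 4]{bayraktar2023finite} and the margin $2K_f<1$, and both outsource the ``middle term'' to \cite{bauerle2021mean,motte2022mean}; the difference is which approximation parameter ($T$ versus $\beta$) plays the role of the uniform cutoff. Your version has two modest advantages: the external fact it needs (discounted value convergence as $N\to\infty$ for fixed $\beta$) is the one already invoked in the proof of Theorem \ref{van_disc_inf} via (\ref{inf_equi}), so no additional citation is required; and as a by-product it upgrades Propositions \ref{van_val} and \ref{van_disc_seq} to a quantitative rate $2(1-\beta)M$ that is uniform over the population size. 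The paper's version, in exchange, produces the uniform finite-horizon estimate (\ref{unif_over_N}), which is reused later in the proof of Theorem \ref{accu}, so it earns its keep elsewhere.
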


\begin{proof}
We will prove the result using the ACOE for the finite population and the infinite population problem. 
Recall that we have proved the existence of a solution to the ACOE for the finite and infinite population problems under Assumption \ref{lip_reg}, in Theorem \ref{van_disc_inf} and Theorem \ref{van_disc_fin} such that:
\begin{align*}
&h^N(\mu^N) = \inf_{\Theta^N}\left\{k(\mu^N,\Theta^N)+ \int h^N(\mu_1^N) \eta(d\mu_1^N|\mu^N,\Theta^N) \right\} - j^N\\
&h(\mu)=\inf_{\Theta}\left\{k(\mu,\Theta)+ \int h(\mu_1) \eta(d\mu_1|\mu,\Theta) \right\} - j
\end{align*}
for some measurable relative value functions $h^N, h$, and where $j^N, j$ are constants that are equal to the optimal value functions, i.e. $K_\infty^{N,*}(\mu^N)=j^N$ for all $\mu^N$ and  $K_\infty^{*}(\mu)=j$ for all $\mu$.
In particular, under Assumption \ref{lip_reg}, there exist optimal selectors, say $f$ and $f^N$ such that
\begin{align*}
&h^N(\mu^N) = k(\mu^N,f^N(\mu^N))+ \int h^N(\mu_1^N) \eta(d\mu_1^N|\mu^N,f^N(\mu^N))  - j^N\\
&h(\mu)=k(\mu,f(\mu))+ \int h(\mu_1) \eta(d\mu_1|\mu,f(\mu)) - j
\end{align*}

Iterating these equalities $T$ times we get
\begin{align}\label{acoe_bound}
&h^N(\mu^N)= \sum_{t=0}^{T-1}E_{\mu^N}^{f^N}\left[k(\mu_t^N,\Theta^N_t)\right]+ E_{\mu^N}^{f^N}\left[h^N(\mu_T^N)\right]-Tj^N\nonumber\\
&h(\mu)= \sum_{t=0}^{T-1}E_{\mu}^{f}\left[k(\mu_t,\Theta_t)\right]+ E_{\mu}^{f}\left[h(\mu_T)\right]-Tj
\end{align}

If we denote the finite horizon costs under some policies $g^N,g$ by
\begin{align*}
&K_T^N(\mu^N,g^N)=E_{\mu^N}^{g^N}\left[\sum_{t=0}^{T-1}k(\mu_t^N,\Theta^N_t)\right]\\
&K_T(\mu,g)=E_{\mu}^{g}\left[\sum_{t=0}^{T-1}k(\mu_t,\Theta_t)\right]
\end{align*}
We can then write, using (\ref{acoe_bound})
\begin{align*}
&K_T^N(\mu^N,f^N)-Tj^N=h^N(\mu^N)- E_{\mu^N}^{f^N}\left[h^N(\mu_T^N)\right]\\
&K_T(\mu,f)-Tj=h(\mu)- E_{\mu}^{f}\left[h(\mu_T)\right]
\end{align*}

We will later show that $\|h\|_\infty\leq M$ and $\|h^N\|_\infty\leq M$, where $M$ is uniform over $N$. Hence, we can write
\begin{align*}
&\frac{K_T^N(\mu^N,f^N)}{T}-j^N\leq \frac{2M}{T}\\
&\frac{K_T(\mu,f)}{T}-j\leq\frac{2M}{T}
\end{align*}

If we iterate the ACOE's, over some policies $g,g^N$ (including the finite horizon optimal policies), instead of the selectors $f,f^N$, the equations in (\ref{acoe_bound}), can be represented as the following inequalities:
\begin{align*}
&h^N(\mu^N)\leq \sum_{t=0}^{T-1}E_{\mu^N}^{g^N}\left[k(\mu_t^N,\Theta^N_t)\right]+ E_{\mu_0^N}^{g^N}\left[h^N(\mu_T^N)\right]-Tj^N\nonumber\\
&h(\mu)\leq \sum_{t=0}^{T-1}E_{\mu}^{g}\left[k(\mu_t,\Theta_t)\right]+ E_{\mu}^{g}\left[h(\mu_T)\right]-Tj
\end{align*}
Using the same bound on $h,h^N$
\begin{align*}
&\frac{-2M}{T}\leq\frac{K_T^N(\mu^N,g^N)}{T}-j^N\\\
&\frac{-2M}{T}\leq \frac{K_T(\mu,g)}{T}-j
\end{align*}
As we have noted, the above lower bounds are valid for the finite horizon optimal policies as well. Furthermore, because of the optimality, we have that $\frac{K_T^{N,*}(\mu^N)}{T}\leq \frac{K_T^{N}(\mu^N,f^N)}{T}$ and $\frac{K_T^{*}(\mu)}{T}\leq \frac{K_T(\mu,f)}{T}$ where $f$ and $f^N$ are the selectors for the ACOE, i.e. the optimal policies for the infinite horizon average cost problem. Hence, we can write
\begin{align}\label{unif_over_N}
&\frac{-2M}{T}\leq\frac{K_T^{N,*}(\mu^N)}{T}-j^N\leq \frac{K_T^{N}(\mu^N,f^N)}{T}-j^N\leq \frac{2M}{T}\\
&\frac{-2M}{T}\leq \frac{K_T^*(\mu)}{T}-j\leq \frac{K_T(\mu,f)}{T}-j\leq \frac{2M}{T}\nonumber
\end{align}
Hence, if $M$ is uniform over $N$, taking $T\to\infty$, we get
\begin{align}\label{finite_to_inf}
&\frac{K_T^{N,*}(\mu^N)}{T}-j^N \to 0\nonumber\\
&\frac{K_T^*(\mu)}{T}-j\to 0
\end{align}
uniformly over $N$.

To show that $h$ and $h^N$ are uniformly bounded over $N$, we write using (\ref{fin_equi}) and ($\ref{inf_equi}$)
\begin{align*}
&h^N(\mu^N)=\lim_{\beta\to 1} h_\beta^N (\mu^N)\leq M \\
&h(\mu)=\lim_{\beta\to 1} h_\beta (\mu)\leq M 
\end{align*} 
where $M$ is independent of $N$.

Finally, we write
\begin{align*}
&\left|K_\infty^{N,*}(\mu^N)-K_\infty^*(\mu)\right|= \left|K_\infty^{N,*}(\mu^N)-K_\infty^*(\mu)\pm \frac{K_T^*(\mu)}{T}\pm \frac{K_T^{N,*}(\mu^N)}{T}\right|\\
&\leq \left|K_\infty^{N,*}(\mu^N)- \frac{K_T^{N,*}(\mu^N)}{T}\right|+ \left|\frac{K_T^{N,*}(\mu^N)}{T}-  \frac{K_T^{*}(\mu)}{T}\right|\\
&\quad + \left| \frac{K_T^{*}(\mu)}{T} - K_\infty^*(\mu)\right|
\end{align*}
We have shown in (\ref{finite_to_inf}) that we can choose a large enough $T$, independent of $N$ such that the first and the last terms are less than $\epsilon/3$ for any given $\epsilon>0$. For the chosen $\epsilon$ and $T$, it has been shown in \cite{bauerle2021mean,motte2022mean} that the middle term goes to $0$, i.e. can be made less than $\epsilon/3$, for some large $N<\infty$, under Assumption \ref{lip_reg}. Hence the proof is completed.

\end{proof}

\adk{
\subsection{Accumulation to Optimal Solutions for Infinite Populations}\label{accum}
We have established the existence of an optimal policy for the $N$-population control problem, say $g^N:\P_N(\mathds{X})\to\P_N(\mathds{U}\times\mathds{X})$. Under this optimal policy, we have that 
\begin{align*}
K^N_\infty(\mu^N,g^N)=\lim_{T\to\infty}\frac{1}{T}\sum_{t=0}^{T-1}E^{g^N}\left[\int c(x,u,\mu_t^N)\Theta_t^N(du,dx)\right]=j^N
\end{align*}
for all $\mu^N$ where $j^N$ is the optimal value of the problem. The dynamics for the state-action and state distributions are given in (\ref{trans_measure}). We can also rewrite the accumulated cost under the optimal policy as
\begin{align*}
K^N_\infty(\mu^N,g^N)&=\lim_{T\to\infty}\frac{1}{T}\sum_{t=0}^{T-1}E^{g^N}\left[\int c(x,u,\mu_t^N)\Theta_t^N(du,dx)\right]\\
&=\lim_{T\to\infty}\frac{1}{T}\sum_{t=0}^{T-1} \int k(\mu,\Theta)P_t^N(d\Theta)
\end{align*}
where $P_t^N(d\Theta)\in \P(\P_N(\mathds{U}\times\mathds{X}))\subset  \P(\P(\mathds{U}\times\mathds{X}))$ is the marginal distribution for the state-action distribution of the agents at time $t$ under the policy $g^N$. Note also that $\mu$ is determined by $\Theta$ as it is simply the second marginal of $\Theta(du,dx)$. The next result shows that the accumulation points of $P_t^N$ coincides with the optimal flow of the infinite population control problem as $N$ grows.

We first present a useful lemma.
\begin{lemma}\label{key_lemma}
Assume Assumption \ref{lip_reg} holds. Consider a sequence of functions $\{g^N\}_N$ defined on $\P_N(\mathds{X})$ such that 
\begin{align*}
g^N(\mu^N)\to g(\mu)
\end{align*}
for some function $g$ and for any $W_1(\mu^N,\mu)\to 0$. Assume further that
\begin{align*}
\left|g^N(\mu^N)- g^N(\hat{\mu}^N)\right|\leq K W_1(\mu^N,\hat{\mu}^N)
\end{align*}
for all $\mu^N,\hat{\mu}^N\in\P_N(\mathds{X})$ and for some $K<\infty$ that is independent of $N$.

We then have for any $\mu^N \in \P_N(\mathds{X}), \Theta^N\in \P_N(\mathds{U}\times\mathds{X})$ such that $W_1(\mu^N,\mu)\to 0$ and $W_1(\Theta^N,\Theta)\to 0$ for some $\mu\in\P(\mathds{X})$ and $\Theta\in\P(\mathds{U}\times\mathds{X})$
\begin{align*}
\lim_{N\to\infty}\int g^N(\mu_1^N)\eta^N(d\mu_1^N|\mu^N,\Theta^N) = \int g(\mu_1)\eta(d\mu_1|\mu,\Theta).
\end{align*}
\end{lemma}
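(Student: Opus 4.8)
The plan is to represent both sides as expectations over the driving noises and to pass to the limit by conditioning on the common noise. Writing $\Theta^N=\frac1N\sum_{i=1}^N\delta_{(x^i,u^i)}$ for the atoms of the empirical action, the definition of $\eta^N$ in (\ref{trans_measure}) gives
\begin{align*}
\int g^N(\mu_1^N)\,\eta^N(d\mu_1^N|\mu^N,\Theta^N)=E\Big[\,g^N\big(\nu^N_{w^0,\mathbf{w}}\big)\Big],\quad \nu^N_{w^0,\mathbf{w}}:=\frac1N\sum_{i=1}^N\delta_{f(x^i,u^i,\mu^N,w^i,w^0)},
\end{align*}
where the expectation is over the common noise $w^0$ and the i.i.d.\ idiosyncratic noises $\mathbf{w}=(w^1,\dots,w^N)$, while by (\ref{inf_pop_meas_flow}) the right-hand side of the lemma equals $E_{w^0}[\,g(F(\mu,\Theta,w^0))]$. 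First I would record that $g^N$ is uniformly bounded: since $\mathds{X}$ is compact with $D:=\diam(\mathds{X})<\infty$, every $W_1(\mu^N,\nu^N)\le D$, so the uniform Lipschitz bound together with the convergence $g^N(\mu^N_0)\to g(\mu_0)$ along one fixed sequence yields $\sup_N\|g^N\|_\infty<\infty$. Hence, by dominated convergence in $w^0$, it suffices to prove that for each fixed $w^0$ one has $E_{\mathbf{w}}\big[g^N(\nu^N_{w^0,\mathbf{w}})\big]\to g\big(F(\mu,\Theta,w^0)\big)$.

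Fix $w^0$ and abbreviate $F:=F(\mu,\Theta,w^0)$. The core step is to show $W_1(\nu^N_{w^0,\mathbf{w}},F)\to 0$ in probability, which I would split through the conditional mean measure $\bar\mu^N:=\int\mathcal{T}^{w^0}(\cdot|x,u,\mu^N)\,\Theta^N(dx,du)$. For the bias $W_1(\bar\mu^N,F)$, testing against a $1$-Lipschitz $\phi$ and using that $f$ is $K_f$-Lipschitz uniformly in the noises shows that $(x,u,\mu)\mapsto\int\phi\,d\mathcal{T}^{w^0}(\cdot|x,u,\mu)$ is $K_f$-Lipschitz; comparing $\int\phi\,d\bar\mu^N=\int\big(\int\phi\,d\mathcal{T}^{w^0}(\cdot|x,u,\mu^N)\big)\Theta^N$ with $\int\phi\,dF$ and taking the supremum over $\phi$ then gives $W_1(\bar\mu^N,F)\le K_f\big(W_1(\mu^N,\mu)+W_1(\Theta^N,\Theta)\big)\to 0$, uniformly in $w^0$. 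For the fluctuation, conditionally on $w^0$ the points $f(x^i,u^i,\mu^N,w^i,w^0)$ are independent with mean measure $\bar\mu^N$, so a law of large numbers for empirical measures of independent samples on the compact space $\mathds{X}$ gives $W_1(\nu^N_{w^0,\mathbf{w}},\bar\mu^N)\to 0$ in $L^1$: for a fixed $1$-Lipschitz $\phi$ the centered average $\frac1N\sum_i(\phi-E\phi)$ has variance $O(1/N)$ by boundedness, and one upgrades to the $W_1$ supremum over a finite sup-norm net of the $1$-Lipschitz ball via Arzela--Ascoli. Combining the two estimates yields $W_1(\nu^N_{w^0,\mathbf{w}},F)\to 0$ in probability, and, being bounded by $D$, also in $L^1$.

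It then remains to transfer this convergence to $g^N$ using the uniform Lipschitz bound and the pointwise-convergence hypothesis through a deterministic approximant. Choosing $F^N\in\P_N(\mathds{X})$ with $W_1(F^N,F)\to 0$ (empirical measures are dense), I would write
\begin{align*}
\big|E_{\mathbf{w}}[g^N(\nu^N_{w^0,\mathbf{w}})]-g(F)\big|\le K\,E_{\mathbf{w}}\big[W_1(\nu^N_{w^0,\mathbf{w}},F^N)\big]+\big|g^N(F^N)-g(F)\big|.
\end{align*}
The first term is bounded by $K\big(E_{\mathbf{w}}[W_1(\nu^N_{w^0,\mathbf{w}},F)]+W_1(F,F^N)\big)\to 0$ by the $L^1$ convergence just established, while the second tends to $0$ by the assumed convergence $g^N(F^N)\to g(F)$ along $W_1(F^N,F)\to 0$. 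This gives the fixed-$w^0$ limit, and dominated convergence in $w^0$ completes the proof.

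I expect the fluctuation estimate to be the main obstacle: the summands are independent but not identically distributed, so one cannot invoke the classical empirical-measure law of large numbers verbatim, and care is needed to promote the per-$\phi$ variance bound to convergence in the $W_1$ supremum over all $1$-Lipschitz test functions, which is where compactness of $\mathds{X}$ and the covering of the unit Lipschitz ball enter. The bias estimate and the transfer step are comparatively routine consequences of the Lipschitz assumption (\ref{lip_reg}) and the stated hypotheses on $\{g^N\}$.
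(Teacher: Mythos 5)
Your proof is correct, but it reaches the key convergence of the empirical next-state measure by a genuinely different decomposition than the paper. The paper couples the actual configuration $(x^i,u^i)$ realizing $\Theta^N$ with an i.i.d.\ sample $(\hat x^i,\hat u^i)\sim\Theta$ via the optimal permutation, pays for the swap using the uniform Lipschitz bound on $g^N$ and the Lipschitz property of $f$ (a cost of order $W_1(\Theta^N,\Theta)+W_1(\mu_{\bf\hat x,\hat u},\Theta)$), and then invokes the classical strong law for the genuinely i.i.d.\ configuration to get almost sure convergence of $\mu_1^N$ to $F(\mu,\Theta,w^0)$ for each fixed $w^0$. You instead stay with the actual configuration and split $W_1(\nu^N_{w^0,\mathbf w},F)$ into a bias term through the conditional mean measure $\bar\mu^N=\int\mathcal T^{w^0}(\cdot|x,u,\mu^N)\Theta^N(dx,du)$, controlled by $K_f(W_1(\mu^N,\mu)+W_1(\Theta^N,\Theta))$, plus a fluctuation term for independent but non-identically-distributed summands, which you correctly upgrade from per-test-function variance bounds to the $W_1$ supremum via a finite sup-norm net of the unit Lipschitz ball (legitimate since $\mathds{X}$ is compact). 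The trade-off is that the paper's coupling lets it use the textbook i.i.d.\ law of large numbers but requires the permutation-matching bookkeeping, whereas your route avoids the coupling entirely at the price of a slightly more careful uniform LLN for a triangular array; your version also makes explicit the uniformity over Lipschitz test functions, a point the paper handles only implicitly, and your transfer step through a deterministic approximant $F^N\in\P_N(\mathds{X})$ cleanly separates the roles of the uniform Lipschitz hypothesis and the pointwise-convergence hypothesis on $\{g^N\}$. Both arguments are sound and of comparable length.
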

\begin{proof}
The proof can be found in Appendix \ref{key_lemma_proof}.
\end{proof}

\begin{theorem}\label{accu}
Under Assumption \ref{lip_reg}, we can find a subsequence, say $N'$ such that $P_t^{N'} \to P_t$ weakly for every $t$ for some $P_t\in\P(\P(\mathds{U}\times\mathds{X}))$. Furthermore, the limit flow $\{P_t\}$ is the optimal flow for the infinite population control problem such that 
\begin{align*}
\lim_{T\to\infty}\frac{1}{T}\int k(\mu,\Theta)P_t(d\Theta) = K_\infty^*(\mu)= j 
\end{align*}
\end{theorem}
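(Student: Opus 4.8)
The plan is to extract the subsequence by compactness and a diagonal argument, then to prove the value identity through a uniform double-limit estimate, and finally to identify the limit as a feasible (hence optimal) flow using Lemma~\ref{key_lemma}. First I would note that since $\mathds{U}\times\mathds{X}$ is compact by Assumption~\ref{lip_reg}, the space $\P(\mathds{U}\times\mathds{X})$ is compact in the weak (equivalently $W_1$) topology, and therefore so is $\P(\P(\mathds{U}\times\mathds{X}))$. Hence each sequence $\{P_t^N\}_N$ is precompact, and a diagonal argument over $t=0,1,2,\dots$ yields a single subsequence $N'$ along which $P_t^{N'}\to P_t$ weakly for every $t$. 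Next I would observe that $\Theta\mapsto k(\mu,\Theta)=\int c(x,u,\mu)\Theta(du,dx)$, with $\mu$ the state marginal of $\Theta$, is bounded and continuous on $\P(\mathds{U}\times\mathds{X})$: $c$ is bounded and Lipschitz on the compact set $\mathds{X}\times\mathds{U}\times\P(\mathds{X})$, the marginal map is $W_1$-continuous, and the Lipschitz dependence on $\mu$ is uniform. Writing $\kappa_t^N:=\int k\,dP_t^N$ and $\kappa_t:=\int k\,dP_t$, weak convergence then gives $\kappa_t^{N'}\to\kappa_t$ for each fixed $t$.

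For the value identity I would use the uniform estimate already available from the ACOE iteration in the proof of Theorem~\ref{conv_thm}. Taking $g^N$ to be the stationary optimal selector $f^N$ that generates $\{P_t^N\}$, we have $\sum_{t=0}^{T-1}\kappa_t^N=K_T^N(\mu^N,g^N)$, and the bound $\|h^N\|_\infty\le M$ with $M$ independent of $N$ gives
\begin{align*}
\left|\frac{1}{T}\sum_{t=0}^{T-1}\kappa_t^N-j^N\right|\le\frac{2M}{T}\qquad\text{for all }N,\,T.
\end{align*}
Fixing $\epsilon>0$ and choosing $T$ with $2M/T<\epsilon/3$, the finite sum satisfies $\frac{1}{T}\sum_{t=0}^{T-1}\kappa_t^{N'}\to\frac{1}{T}\sum_{t=0}^{T-1}\kappa_t$ as $N'\to\infty$, while $j^{N'}\to j$ by Theorem~\ref{conv_thm}. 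The triangle inequality then gives
\begin{align*}
\left|\frac{1}{T}\sum_{t=0}^{T-1}\kappa_t-j\right|\le\left|\frac{1}{T}\sum_{t=0}^{T-1}(\kappa_t-\kappa_t^{N'})\right|+\frac{2M}{T}+\left|j^{N'}-j\right|,
\end{align*}
and since the left-hand side does not depend on $N'$, letting $N'\to\infty$ yields $\left|\frac{1}{T}\sum_{t=0}^{T-1}\kappa_t-j\right|\le\epsilon$ for every such $T$. As $\epsilon$ is arbitrary, $\frac{1}{T}\sum_{t=0}^{T-1}\kappa_t\to j$ as $T\to\infty$, which combined with $j=K_\infty^*(\mu)$ from Theorem~\ref{van_disc_inf} is the asserted identity.

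It remains to verify that $\{P_t\}$ is an admissible flow for the infinite population problem, so that attaining value $j$ makes it optimal. Admissibility of the actions passes to the limit, because the marginal constraint $\Theta\in U(\mu)$ is closed under weak convergence. To propagate the one-step dynamics I would fix a bounded Lipschitz $\phi$ on $\P(\mathds{X})$ and set $\Psi^N(\Theta):=\int\phi(\mu_1)\eta^N(d\mu_1|\mu(\Theta),\Theta)$ and $\Psi(\Theta):=\int\phi(\mu_1)\eta(d\mu_1|\mu(\Theta),\Theta)$. By Lemma~\ref{key_lemma}, $\Psi^{N'}(\Theta^{N'})\to\Psi(\Theta)$ whenever $\Theta^{N'}\to\Theta$ in $W_1$; together with the weak continuity of $\eta$ and $P_t^{N'}\to P_t$ this continuous-convergence property yields $\int\Psi^{N'}\,dP_t^{N'}\to\int\Psi\,dP_t$. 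Since the left-hand side equals $E[\phi(\mu_{t+1}^{N'})]$, the state marginal of $P_{t+1}$ equals $\int\eta(\cdot|\mu(\Theta),\Theta)P_t(d\Theta)$, so the flow is consistent with the infinite population kernel $\eta$. Thus $\{P_t\}$ is feasible and attains the optimal value $j$, hence optimal.

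The hard part will be the feasibility step. The value identity is soft once the uniform bound $\|h^N\|_\infty\le M$ is in hand, but verifying that the weak limit of the finite-population empirical-measure flows is still a valid flow for the infinite population dynamics requires the propagation-of-chaos-type convergence $\eta^N\to\eta$ encoded in Lemma~\ref{key_lemma}, and it must be applied in a continuous-convergence form compatible with the simultaneously varying integrators $P_t^{N'}$; controlling this interaction between the converging kernels and the converging measures is the main technical obstacle.
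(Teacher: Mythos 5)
Your proposal is correct and follows essentially the same route as the paper's proof: compactness of $\P(\P(\mathds{U}\times\mathds{X}))$ plus a diagonal argument for the subsequence, the uniform-in-$N$ bound $|K_T^N(\mu^N,g^N)/T-j^N|\le 2M/T$ from the ACOE iteration to justify interchanging the limits in $T$ and $N'$, and Lemma~\ref{key_lemma} in continuous-convergence form together with $P_t^{N'}\to P_t$ to pass the one-step dynamics to the limit. Your explicit $\epsilon/3$ splitting is just a more spelled-out version of the paper's limit interchange, and your identification of the feasibility step as the technical crux matches where the paper invokes \cite[Theorem 3.1]{Lan81}.
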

\begin{proof}
 We first note that since $\mathds{U}\times\mathds{X}$ is assumed to be compact and Polish, $\P(\mathds{U}\times\mathds{X})$ is also a compact and Polish under the weak convergence topology. In turn, $\P(\P(\mathds{U}\times\mathds{X}))$ is also compact and Polish.

 Hence, we can find a sebsequence $N_0$ such that $P_0^{N_0}\to P_0$ weakly for some $P_0 \in \P(\P(\mathds{U}\times\mathds{X}))$. In fact, for time $t=0$, the common noise does not affect the distributions. Thus, if the initial state distribution $\mu^N\to \mu$, then one can show that $P_0(\cdot)= \delta_{\Theta_0}(\cdot)$ for some $\Theta_0\in\P(\mathds{U}\times\mathds{X})$ such that $\Theta_0(\mathds{U},\cdot)=\mu(\cdot)$.

For $P_1^{N_0}$, one can find a further subsequence $N_1$, such that $P_1^{N_1}\to P_1$ for some $P_1\in  \P(\P(\mathds{U}\times\mathds{X}))$. Continuing in this manner, and using a standard diagonal argument, we can find a subsequence $N'$ such that $P_t^{N'}\to P_t$ weakly for all $t\geq 0$.

We now show that the limit flow $\{P_t\}_t$ is consistent with the dynamics of the infinite population dynamics. Note that any $P_t\in\P(\P(\mathds{U}\times\mathds{X}))$ induces a probability measure in $\P(\P(\mathds{X}))$, say $\bar{P}_t$ such that
\begin{align*}
\bar{P}_t(\mu\in A) = P_t(\{\Theta: \Theta(\mathds{U},\cdot)\in A\})
\end{align*} 
for any $A\in \B(\P(\mathds{X}))$.
The sequence $\{P_t\}\subset \P(\P(\mathds{U}\times\mathds{X}))$ is consistent with the infinite population dynamics, if we have
\begin{align*}
\bar{P}_{t+1}(\mu_{t+1}\in A) = \int \eta(\mu_{t+1}\in A|\mu,\Theta)P_t(d\Theta).
\end{align*}
For the $N$- population dynamics, we have that 
\begin{align}\label{finite_dyn}
\int f(\mu_{t+1})\bar{P}^N_{t+1}(d\mu_{t+1}) = \int f(\mu_{t+1})\eta^N(d\mu_{t+1}|\mu,\Theta)P_t^N(d\Theta)
\end{align}
for any continuous and bounded function $f$, where we use $\eta^N$ to denote the one step transition kernel of the $N$-population dynamics.
By Lemma \ref{key_lemma}, for any $\mu^N,\Theta^N \to \mu,\Theta$, we can write
\begin{align*}
\int f(\mu_{t+1})\eta^N(d\mu_{t+1}|\mu^N,\Theta^N) \to \int f(\mu_{t+1})\eta(d\mu_{t+1}|\mu,\Theta).
\end{align*}
Hence,  if we take the limit of both sides in (\ref{finite_dyn}) along the chosen subsquence $N'$, we get 
\begin{align*}
\int f(\mu_{t+1})\bar{P}_{t+1}(d\mu_{t+1}) = \int f(\mu_{t+1})\eta(d\mu_{t+1}|\mu,\Theta)P_t(d\Theta)
\end{align*}
where we used the fact that $P_t^{N'}\to P_t$ weakly for all $t$ and \cite[Theorem 3.1]{Lan81}. Since the class of continuous and bounded functions are measure determining (see \cite[Theorem 1.2]{Billingsley}), we can conclude that the limit sequence $\{P_t\}_t$ is consistent with the infinite population dynamics.

We now prove the second part of the result. The result is an implication of Theorem \ref{conv_thm}. By (\ref{unif_over_N}), we have that 
\begin{align*}
\frac{K_T^{N}(\mu^N,g^N)}{T}\to j^N
\end{align*}
where $g^N$ is the optimal stationary policy for the $N$-population. Furthermore, the convergence is uniform over $N$. Thus, if we take the limit of both sides along the subsequence $N'$, we get
\begin{align*}
&\lim_{N'\to\infty}\lim_{T\to\infty}\frac{K_T^{N'}(\mu^{N'},g^{N'})}{T}=\lim_{N'\to\infty} j^{N'}=j\\
&=\lim_{T\to\infty} \lim_{N'\to\infty}\frac{K_T^{N'}(\mu^{N'},g^{N'})}{T} =\lim_{T\to\infty}\frac{1}{T}\sum_{t=0}^{T-1} \lim_{N'\to\infty}\int k(\mu,\Theta)P_t^{N'}(d\Theta)\\
&=\lim_{T\to\infty}\frac{1}{T}\sum_{t=0}^{T-1}\int k(\mu,\Theta)P_t(d\Theta)
\end{align*}
where we used the weak convergence of $P_t^{N'}\to P_t$ and the continuity of the function  $k(\mu,\Theta)$ on $(\mu,\Theta)$ under Assumption \ref{lip_reg}.
This shows that
\begin{align*}
K_\infty^*(\mu)=j=\lim_{T\to\infty}\frac{1}{T}\sum_{t=0}^{T-1} \int k(\mu,\Theta)P_t(d\Theta).
\end{align*}
\end{proof}
}

\adk{
\subsection{Near Optimality of Symmetric Policies for Finite Population Control}\label{sym_pol_sec} In this section, we focus on the effect of using symmetric policies for finite population control problem. The following example shows that the symmetric policies may not achieve the optimal performance, and personalized policies have to be used for the optimality.
\begin{example}\label{counter_ex}
Consider a team control problem with two agents, i.e. $N=2$. We assume that $\mathds{X}=\mathds{U}=\{0,1\}$. The stage wise cost function of the agents is defined as
\begin{align*}
c(x,u,\mu_{\bf x}) = W_1(\mu_{\bf x},\bar{\mu})
\end{align*} 
where 
\begin{align*}
\bar{\mu}=\frac{1}{2}\delta_0 + \frac{1}{2}\delta_1.
\end{align*}
In words, the state distribution should be distributed equally over the state space $\{0,1\}$ for minimal stage-wise cost. For the dynamics we assume a deterministic model such that
\begin{align*}
x_{t+1}=u_t.
\end{align*}
In words, the action of an agent purely determines the next state of the same agent. The goal of the agents is to minimize
\begin{align*}
K_\infty(x_0^1,x_0^2,g^1,g^2)=\limsup_{T\to \infty} \frac{1}{T}\sum_{t=0}^{T-1}E^{g^1,g^2}\left[\frac{c(x_t^1,u_t^1,\mu_{\bf x_t}) +c(x_t^2,u_t^2,\mu_{\bf x_t})  }{2}\right]
\end{align*}
for some initial state values ${\bf{x}}_0=[x_0^1,x_0^2]$, by choosing policies $g^1,g^2$. The expectation is over the possible randomization of the policies.  We assume full information sharing such that every agent has access to the state and action information of the other agent. 

We let the initial states be $x_0^1=x_0^2=0$. An optimal policy for the agents for the problem is given by
\begin{align*}
&g^1(0,0)= 0, \qquad g^2(0,0)=1\\
&g^1(0,1)= 0 ,\qquad g^2(0,1)=1\\
 &g^1(1,0)= 1 ,\qquad g^2(1,0)=0\\
&g^1(1,1)= 1 ,\qquad g^2(1,1)=0
\end{align*}
which always spreads the agents equally over the state space. One can realize that, when the agents are positioned at either $(0,0)$ or $(1,1)$, they have to use personalized policies to decide on which one to be placed at $0$ or $1$. 

For any symmetric policy $g^1(x^1,x^2)=g^2(x^1,x^2)=g(x^1,x^2)$, including the randomized ones, there will always be cases with strict positive probability, where the agents are positioned at the same state, and thus the performance will be strictly worse than the optimal performance.
\end{example}

We now introduce the symmetric  policies that will be used by the agents for the finite population setting. We will focus on the optimal policies for infinite population under the discounted cost criteria. Let $\Theta(du,dx)$ be an optimal state-action distribution  for some measure $\mu\in\P(\mathds{X})$. We then write
\begin{align*}
\Theta(du,dx)=\gamma(du|x,\mu)\mu(dx).
\end{align*}
In the finite population setting, we let the agents use the symmetric randomized policy $\gamma(du|x,\mu)$. That is, an agent $i$, at time $t$, observes their local state $x_t^i$ and the state distribution of the agents, say $\mu_t^N$, and applies $\gamma(du|x^i_t,\mu_t^N)$. This agent level policy then defines a team policy for the state distribution $\mu^N$ such that 
\begin{align*}
g_\beta(\mu^N):=\Theta^N(du,dx)=\gamma(du|x,\mu^N)\mu^N(dx).
\end{align*}
We use the $\beta$ dependence to emphasize that the policy is constructed using the discounted cost criteria.

Our next result shows that this policy, which is symmetric between the agents, will achieve near optimal performance for sufficiently large populations under the discounted cost criteria.
\begin{proposition}\label{sym_pol_disc}
Under Assumption \ref{lip_reg}, for any $\mu^N\to \mu$
\begin{align*}
\lim_{N\to\infty}K_\beta^N(\mu^N,g_\beta)= K_\beta^*(\mu).
\end{align*}
Furthermore,
\begin{align*}
\lim_{N\to\infty} K_\beta^N(\mu^N,g_\beta)-K_\beta^{N,*}(\mu^N) = 0.
\end{align*}
Hence, $g_\beta$ will achieve near optimal performance for finite populations.
\end{proposition}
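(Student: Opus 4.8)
The plan is to reduce both assertions to the convergence, under the \emph{fixed} symmetric policy, of the finite-population discounted cost to the infinite-population discounted cost, and then to combine this with the already-established convergence of the optimal discounted values.

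First I would identify the limit. By construction $\gamma(du\mid x,\mu)$ is the disintegration of the optimal infinite-population discounted selector $g^*(\mu)=\Theta(du,dx)$, whose existence under Assumption \ref{lip_reg} follows by the same measurable-selection argument as in the second part of Theorem \ref{van_disc_inf}. Since recombining a disintegrated kernel with its marginal returns the joint law, applying $\gamma$ stationarily reproduces the optimal selector at every step: at state $\mu_t$ it generates exactly $\Theta_t=\gamma(\cdot\mid\cdot,\mu_t)\mu_t=g^*(\mu_t)$. Hence the infinite-population flow driven by $\gamma$ achieves the optimal discounted value, i.e. $K_\beta(\mu,\gamma)=K_\beta^*(\mu)$, where $K_\beta(\mu,\gamma)$ denotes the infinite-population discounted cost under $\gamma$.

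The main step is to prove $K_\beta^N(\mu^N,g_\beta)\to K_\beta(\mu,\gamma)$. I would couple, term by term in the discounted sum, the finite-population flow $\{\mu_t^N\}$ produced by $g_\beta$ (each agent applying $\gamma(\cdot\mid x_t^i,\mu_t^N)$) with the infinite-population flow $\{\mu_t\}$ produced by $\gamma$, both driven by the same common noise. Conditionally on the common noise and on $\mu_t^N$ the agents' next states are independent, so $\mu_{t+1}^N$ concentrates about its conditional mean (an empirical-measure fluctuation of order $N^{-1/l}$ in $W_1$), while the Lipschitz bound on $f$ with $2K_f<1$ propagates the discrepancy $W_1(\mu_t^N,\mu_t)$ forward contractively, so it does not accumulate over the horizon. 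Together with the Lipschitz continuity of $c$ (hence of $k$) and the uniform boundedness of the stage cost on the compact $\mathds{X}\times\mathds{U}$, this yields $E[k(\mu_t^N,\Theta_t^N)]\to E[k(\mu_t,\Theta_t)]$ for each $t$; since the stage costs are uniformly bounded and $\sum_t\beta^t<\infty$, dominated convergence for series lets me pass the limit through the discounted sum, giving $K_\beta^N(\mu^N,g_\beta)\to K_\beta(\mu,\gamma)=K_\beta^*(\mu)$. This is the first claim, and it is precisely the fixed-policy propagation-of-chaos statement established under Assumption \ref{lip_reg} in \cite{bauerle2021mean,motte2022mean}, which I would invoke for rigor.

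For the second claim I would add and subtract $K_\beta^*(\mu)$ and write
\[
K_\beta^N(\mu^N,g_\beta)-K_\beta^{N,*}(\mu^N)=\big(K_\beta^N(\mu^N,g_\beta)-K_\beta^*(\mu)\big)+\big(K_\beta^*(\mu)-K_\beta^{N,*}(\mu^N)\big).
\]
The first bracket vanishes by the main step; the second vanishes by the convergence $K_\beta^{N,*}(\mu^N)\to K_\beta^*(\mu)$ established under Assumption \ref{lip_reg} (the same fact used around (\ref{inf_equi}), from \cite{bauerle2021mean,motte2022mean}). The main obstacle is the propagation-of-chaos estimate of the main step: one must control simultaneously the statistical fluctuation of the empirical measure and its transport through the nonlinear, mean-field-dependent dynamics, and also ensure that the per-step comparison $\gamma(\cdot\mid x,\mu_t^N)\approx\gamma(\cdot\mid x,\mu_t)$ is valid, which requires enough regularity of the selector $\gamma$ in the mean-field argument. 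It is exactly the contraction $2K_f<1$ that keeps the accumulated error from compounding across time, so that the per-step estimates can be summed against the geometric weights $\beta^t$.
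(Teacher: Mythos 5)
Your reduction of the second claim to the first is the same as the paper's, and your observation that the stationary use of $\gamma$ in the infinite-population problem achieves $K_\beta^*(\mu)$ is correct. The problem is the main step. Your term-by-term coupling requires that after establishing $W_1(\mu_t^N,\mu_t)\to 0$ you can conclude $\gamma(\cdot\mid x,\mu_t^N)\approx\gamma(\cdot\mid x,\mu_t)$, hence $\Theta_t^N\approx\Theta_t$, so that both the stage cost $k(\mu_t^N,\Theta_t^N)$ and the next-step measure stay close. That step needs continuity (in fact Lipschitz continuity, uniformly in $N$, if the error is to propagate contractively as you describe) of the disintegrated selector $\gamma(du\mid x,\mu)$ in $\mu$. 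Assumption \ref{lip_reg} does not give this: the measurable-selection argument in Theorem \ref{van_disc_inf} produces only a \emph{measurable} optimal selector, and when the discounted Bellman minimizer is not unique the selector can be genuinely discontinuous in $\mu$. The fixed-policy propagation-of-chaos results you cite from \cite{bauerle2021mean,motte2022mean} are proved for policies that are (Lipschitz) continuous in the mean-field argument, so invoking them here is circular — you flag this as ``the main obstacle'' but do not resolve it, and it is exactly the point where the argument breaks.

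The paper's proof is structured to avoid needing any regularity of the policy. It iterates the one-step operator $T^N$ associated with $g_\beta$ on the optimal value functions $K_\beta^{N,*}$, which \emph{are} Lipschitz uniformly in $N$ by (\ref{fin_equi}), and shows $\lim_{N}(T_k^N K_\beta^{N,*})(\mu^N)=K_\beta^*(\mu)$ for each $k$ by a subsequence--contradiction argument: along any subsequence the actions $g_\beta(\mu^{N})$ have a weakly convergent further subsequence whose limit is \emph{some} optimal action for $\mu$ (closedness of the graph of the argmin correspondence, established at the start of the appendix proof), and the Bellman equation together with the uniqueness of $K_\beta^*$ then pins down the limit regardless of which optimal action is selected. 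The uniform-in-$N$ contraction of $T^N$ converts the per-$k$ limits into the statement about $K_\beta^N(\mu^N,g_\beta)$. To repair your argument you would either need to prove your chosen selector admits a continuous (Lipschitz) version, or replace the pathwise coupling by an argument, like the paper's, that only uses upper semicontinuity of the set of optimal actions.
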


\begin{proof}
The proof can be found in Appendix \ref{sym_pol_disc_proof}.
\end{proof}

\begin{theorem}
Under Assumption \ref{lip_reg}, for any given $\epsilon>0$ we can find a $\beta<1$ sufficiently close to 1, and some $\bar{N}$ such that for all $N>\bar{N}$ we have that 
\begin{align*}
\left|K_\infty^N(\mu^N,g_\beta) - j^N\right| \leq \epsilon.
\end{align*}
In words, the symmetric feedback policy $g_\beta$ is near optimal for $N$-population problem if $N$ is sufficiently large.
\end{theorem}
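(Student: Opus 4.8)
The plan is to prove the nontrivial inequality $K_\infty^N(\mu^N,g_\beta) \le j^N + \epsilon$; the reverse bound $K_\infty^N(\mu^N,g_\beta) \ge j^N$ is automatic because $j^N = K_\infty^{N,*}(\mu^N)$ is the optimal average cost and $g_\beta$ is an admissible stationary symmetric policy. First I would convert the finite-population discounted Bellman equation for $g_\beta$ into an approximate ACOE. Writing $h_\beta^N(\mu):=K_\beta^{N,*}(\mu)-K_\beta^{N,*}(\mu')$ for the optimal relative discounted value and $j_\beta^N:=(1-\beta)K_\beta^{N,*}(\mu')$, and replacing $\beta\int h_\beta^N(\mu_1)\,\eta^N(d\mu_1|\mu,\Theta)$ by $\int h_\beta^N(\mu_1)\,\eta^N(d\mu_1|\mu,\Theta)-(1-\beta)\int h_\beta^N(\mu_1)\,\eta^N(d\mu_1|\mu,\Theta)$, one sees that $h_\beta^N$ satisfies the true weight-one ACOE up to a defect bounded by $(1-\beta)\|h_\beta^N\|_\infty$. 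The decisive fact is that $\|h_\beta^N\|_\infty\le M$ with $M$ \emph{uniform in both $\beta$ and $N$}: this is exactly the bound $\tfrac{2K_c}{1-2K_f\beta}W_1(\mu,\mu')\le M$ obtained in the proof of Theorem \ref{van_disc_fin} from \cite[Lemma 4]{bayraktar2023finite} and the compactness of $\mathds{X}$.

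Next I would insert $g_\beta$ into this approximate equation. Let $T^N v(\mu):=\inf_{\Theta\in U(\mu)}\big(k(\mu,\Theta)+\int v(\mu_1)\,\eta^N(d\mu_1|\mu,\Theta)\big)$ be the finite-population weight-one operator, and let $\delta_N:=\sup_\mu\big(k(\mu,g_\beta(\mu))+\int h_\beta^N(\mu_1)\,\eta^N(d\mu_1|\mu,g_\beta(\mu)) - T^N h_\beta^N(\mu)\big)$ denote the one-step Bellman residual of $g_\beta$. Combining with the defect estimate gives $j_\beta^N+h_\beta^N(\mu)\ge k(\mu,g_\beta(\mu))+\int h_\beta^N(\mu_1)\,\eta^N(d\mu_1|\mu,g_\beta(\mu))-\delta_N-(1-\beta)M$. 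Iterating this inequality forward under $g_\beta$ for $T$ steps, the $h_\beta^N$ terms telescope with weight one; dividing by $T$, using $\|h_\beta^N\|_\infty\le M$ to kill the boundary terms, and letting $T\to\infty$ yields
\[
K_\infty^N(\mu^N,g_\beta)\ \le\ j_\beta^N+\delta_N+(1-\beta)M .
\]

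The heart of the argument is to show $\delta_N$ is asymptotically of order $(1-\beta)$. Since $g_\beta$ is the discounted-optimal selector for the \emph{infinite}-population problem, it exactly minimizes $\inf_\Theta\big(k(\mu,\Theta)+\beta\int h_\beta(\mu_1)\,\eta(d\mu_1|\mu,\Theta)\big)$, where $h_\beta(\mu)=K_\beta^*(\mu)-K_\beta^*(\mu')$; converting weight $\beta$ to weight one there costs at most $2(1-\beta)M$, so the infinite-population weight-one residual of $g_\beta$ is at most $2(1-\beta)M$. It then remains to pass from $\eta^N$ to $\eta$ and from $h_\beta^N$ to $h_\beta$ inside $\delta_N$: here I would use the uniform convergence $h_\beta^N\to h_\beta$ (pointwise value convergence from \cite{bauerle2021mean,motte2022mean} upgraded via the equi-Lipschitz bound and compactness of $\P(\mathds{X})$ in $W_1$), the continuity of $k$, the weak convergence $\eta^N\to\eta$ furnished by Lemma \ref{key_lemma}, and the compactness and lower semicontinuity of $U(\mu)$, to conclude $\limsup_{N\to\infty}\delta_N\le 2(1-\beta)M$. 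This cross-population comparison, forcing the finite-population residual of the \emph{infinite}-population policy to be uniformly small over the whole state space, is the step I expect to be the main obstacle, precisely because $g_\beta$ need not be regular (Example \ref{counter_ex} shows the genuinely asymmetric nature of optimal finite policies), so the uniformity cannot come from any contraction of the closed loop and must instead be extracted from equicontinuity of the operator together with compactness.

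Finally I would assemble the pieces. Combining the displayed bound with $\limsup_N\delta_N\le 2(1-\beta)M$, with $j_\beta^N=(1-\beta)K_\beta^{N,*}(\mu')\to(1-\beta)K_\beta^*(\mu')$ as $N\to\infty$ (again \cite{bauerle2021mean,motte2022mean}), and with $j^N\to j=K_\infty^*(\mu)$ from Theorem \ref{conv_thm}, gives
\[
\limsup_{N\to\infty}\big(K_\infty^N(\mu^N,g_\beta)-j^N\big)\ \le\ \big((1-\beta)K_\beta^*(\mu')-j\big)+3(1-\beta)M .
\]
By Proposition \ref{van_disc_seq} the first term tends to $0$ as $\beta\to1$ and the second is $O(1-\beta)$; hence I would first choose $\beta<1$ close enough to $1$ that the right-hand side is at most $\epsilon/2$, and then choose $\bar N$ large enough to realize the $\limsup$, so that $K_\infty^N(\mu^N,g_\beta)-j^N\le\epsilon$ for all $N>\bar N$. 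With the trivial lower bound this yields $|K_\infty^N(\mu^N,g_\beta)-j^N|\le\epsilon$, as claimed.
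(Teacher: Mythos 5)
Your proposal is correct and follows essentially the same route as the paper: vanishing-discount conversion of the discounted Bellman equations into approximate weight-one ACOEs with defect $O((1-\beta)M)$ using the $\beta$- and $N$-uniform bound on $h_\beta^N$ and $h_\beta$, a cross-population comparison of the one-step residual of $g_\beta$ (the paper's terms (\ref{1})--(\ref{5}) are exactly your $\delta_N$ decomposition, handled via Lemma \ref{key_lemma}, the proof of Proposition \ref{sym_pol_disc}, Theorem \ref{conv_thm} and Proposition \ref{van_disc_seq}), and the same final verification inequality from \cite[Theorem 7.1.3]{yuksel2020control} with the order of limits $\beta\to1$ first, then $N\to\infty$. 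Your explicit insistence that the residual be controlled uniformly in $\mu$ (via the supremum in $\delta_N$, equicontinuity and compactness) is, if anything, slightly more careful than the paper's pointwise phrasing, but it is not a different argument.
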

\begin{proof}
Recall $h_\beta(\mu):=K_\beta^*(\mu)-K_\beta^*(\mu_0)$. Using the Bellman equation for $K_\beta^*(\mu)$ we can write
\begin{align}\label{eq1}
h_\beta(\mu) &= k(\mu,\Theta) + \beta\int h_\beta(\mu_1)\eta(d\mu_1|\mu,\Theta) - (1-\beta)K_\beta^*(\mu_0)\nonumber\\
&= k(\mu,\Theta) + \int h_\beta(\mu_1)\eta(d\mu_1|\mu,\Theta) -  (1-\beta)\int h_\beta(\mu_1)\eta(d\mu_1|\mu,\Theta)- (1-\beta)K_\beta^*(\mu_0)
\end{align}
where $\Theta$ is an optimal state-action distribution for $\mu$. We also recall $h_\beta^N(\mu^N):=K_\beta^{N,*}(\mu^N)-K_\beta^{N,*}(\mu_0^N)$ where $\mu_0^N\in\P_N(\mathds{X})$ is chosen such that $\mu_0^N \to \mu_0$.  We can then rewrite (\ref{eq1}) as
\begin{align*}
\pm h_\beta^N(\mu^N)\pm j^N + h_\beta(\mu) =&  k(\mu,\Theta) + \int h_\beta(\mu_1)\eta(d\mu_1|\mu,\Theta) \\
&-  (1-\beta)\int h_\beta(\mu_1)\eta(d\mu_1|\mu,\Theta)- (1-\beta)K_\beta^*(\mu_0)\\
& \pm \left(k(\mu^N,g_\beta(\mu^N)) + \int h_\beta^N(\mu_1^N)\eta(d\mu_1^N|\mu^N,g_\beta(\mu^N)) \right).
\end{align*}
 By rearranging the terms we can write
\begin{align}
&k(\mu^N,g_\beta^\infty(\mu^N)) + \int h_\beta^N(\mu_1^N)\eta(d\mu_1^N|\mu^N,g_\beta^\infty(\mu^N))\nonumber\\
=& j^N + h_\beta^N(\mu^N)\nonumber\\
& + h_\beta(\mu)-h_\beta^N(\mu^N)\label{1}\\
&+ j - j^N\label{2}\\
&  -\left(j - (1-\beta)K_\beta^*(\mu_0)\right)  + (1- \beta)\int h_\beta(\mu_1)\eta(d\mu_1|\mu,\Theta)\label{3}\\
&+ k(\mu^N,g_\beta^\infty(\mu^N)) - k(\mu,\Theta)\label{4}\\
&+\int h_\beta^N(\mu_1^N)\eta(d\mu_1^N|\mu^N,g_\beta^\infty(\mu^N)) - \int h_\beta(\mu_1)\eta(d\mu_1|\mu,\Theta)\label{5}
\end{align}
We first choose $\beta$ large enough (independent of $N$) such that (\ref{3}) is smaller than $\epsilon$. This can be done using Proposition \ref{van_disc_seq}, and since $h_\beta$ is uniformly bounded over $\beta$ (see (\ref{inf_equi})).

For the chosen $\beta$, we can make (\ref{1}) smaller than $\epsilon$, since $K_\beta^{N,*}(\mu)\to K_\beta^*(\mu)$ and $K_\beta^{N,*}(\mu_0^N)\to K_\beta^*(\mu_0)$ as $N\to \infty$ for fixed $\beta<1$. (\ref{2}) can be made smaller than $\epsilon$ by Theorem \ref{conv_thm}.

Finally we focus on  (\ref{4})  and (\ref{5}) :
\begin{align*}
&k(\mu^N,g_\beta(\mu^N) +\int h_\beta^N(\mu_1^N)\eta(d\mu_1^N|\mu^N,g_\beta(\mu^N))  - k(\mu,\Theta)-\int h_\beta(\mu_1)\eta(d\mu_1|\mu,\Theta)\\
&=k(\mu^N,g_\beta(\mu^N) + \beta \int h_\beta^N(\mu_1^N)\eta(d\mu_1^N|\mu^N,g_\beta(\mu^N)) + (1-\beta) \int h_\beta^N(\mu_1^N)\eta(d\mu_1^N,\mu^N,g_\beta(\mu^N))\\
&\quad - k(\mu,\Theta)-\beta \int h_\beta(\mu_1)\eta(d\mu_1|\mu,\Theta) - (1-\beta) \int h_\beta(\mu_1)\eta(d\mu_1|\mu,\Theta) \\
& =k(\mu^N,g_\beta(\mu^N) + \beta \int K_\beta^{N,*}(\mu_1^N)\eta(d\mu_1^N|\mu^N,g_\beta(\mu^N))  - K_\beta^*(\mu) - \beta K_\beta^{N,*}(\mu_0^N) + \beta K_\beta^*(\mu_0)\\
&\quad  +(1-\beta) \int h_\beta^N(\mu_1^N)\eta(d\mu_1^N|\mu^N,g_\beta(\mu^N)) -  (1-\beta) \int h_\beta(\mu_1)\eta(d\mu_1|\mu,\Theta)
\end{align*}
The last line can be made smaller than $\epsilon$ by choosing $\beta$ sufficiently close to $1$ independent of $N$ since $h_\beta^N$ and $h_\beta$ are previously shown to be uniformly bounded (uniform over $N$ as well). Furthermore, for the chosen $\beta$, we have that $\beta K_\beta^{N,*}(\mu_0^N) \to \beta K_\beta^*(\mu_0)$ and finally we have shown in the proof of Proposition \ref{sym_pol_disc} that 
\begin{align*}
\lim_{N\to \infty}k(\mu^N,g_\beta(\mu^N) + \beta \int K_\beta^{N,*}(\mu_1^N)\eta(d\mu_1^N|\mu^N,g_\beta(\mu^N)) = K_\beta^*(\mu).
\end{align*}
We then can make the terms  (\ref{4})  and (\ref{5}) smaller than $\epsilon$ by choosing $\beta$ and $N$ sufficiently large.

 Hence we can write the following for the chosen $\beta$ and $N$
\begin{align*}
k(\mu^N,g_\beta^\infty(\mu^N)) + \int h_\beta^N\eta(d\mu_1^N|\mu^N,g_\beta^\infty(\mu^N)) \leq j^N + 5\epsilon + h_\beta^N(\mu^N).
\end{align*}
We note the following inequality to conclude the result (see \cite[Theorem 7.1.3]{yuksel2020control}): if
\begin{align*}
g+h(\mu^N)\geq k(\mu^N,f(\mu^N)) + \int h(\mu_1^N)\eta(d\mu_1^N|\mu^N,f(\mu^N))
\end{align*}
for some functions $h,f$ and for some constant $g$ and  if $h$ is bounded then
\begin{align*}
g\geq K_\infty^N(\mu^N,f).
\end{align*}
We have established this inequality with $g=j^N+5\epsilon$, $f\equiv g_\beta$ and $h\equiv h_\beta^N$. Therefore we can conclude that
\begin{align*}
K_\infty^N(\mu^N,g_\beta)\leq j^N + 5\epsilon
\end{align*}
as $h^N_\beta$ is bounded uniformly.
\end{proof}

\begin{remark}
The symmetric policy we have considered in this section requires an agent to have access to two main source of information. At every time step $t$, the agent $i$ needs to have access to the local state $x_t^i$ and the distribution of the state variables of the other agents, i.e. the mean-field term $\mu_t^N=\mu_{\bf x_t}$. Although the computation of the policies do not require coordination, to execute the policies the agents need to coordinate to collect the mean-field term, or a coordinator needs to provide the mean-field term to the agents. Thus, these policies are not decentralized to be precise.

An alternative way to use the infinite population limit solution for the finite population control, is by estimating the marginal distribution of  the local state variable without communicating the other agents. If, for example, there is no common noise for the dynamics, agent $i$ might simply consider the mean-filed term for the infinite population problem $\mu_t$ given by the dynamics (\ref{inf_pop_meas_flow}). Without common noise, this process forms a deterministic sequence, and thus the agent $i$ can estimate this measure without communicating with other agents. If there is a common noise, then the agents can estimate this fictitious infinite population mean-field term in  a similar way if they have access to the common noise variable. Hence,  if there is no common noise, or if the agents have access to the common noise variable, the infinite population solution can be used by the agents without communication in a fully decentralized way. Similar analysis to what we have in this section can be used to establish the near optimality of these decentralized policies.
\end{remark}

}

\appendix
\section{Proof of Proposition \ref{sym_pol_disc}}\label{sym_pol_disc_proof}

Consider a sequence of measures $\{\mu_n\}_n\subset \P(\mathds{X})$ such that $\mu_n\to \mu$ weakly for some $\mu\in\P(\mathds{X})$. Furthermore, consider the corresponding optimal state-action distribution $\Theta_n(du,dx)$ for $\mu_n$ under the discounted cost criteria for the infinite population problem. Since $\mathds{U}\times\mathds{X}$ is assumed to be compact, there exists a convergent subsequence, say $\Theta_{n'}\to \Theta$. Furthermore, $\Theta$ is an admissible state-action distribution for $\mu$, i.e. $\Theta(\mathds{U},\cdot)=\mu(\cdot)$ since $\mu_n\to \mu$. We claim that the limit $\Theta$ is also an optimal state-action distribution for $\mu$ under the discounted cost criteria. Consider the following Bellman equation:
\begin{align*}
K_\beta^*(\mu_{n'})= k(\mu_{n'},\Theta_{n'}) + \beta \int K_\beta^*(\mu_1)\eta(d\mu_1|\mu_{n'},\Theta_{n'}).
\end{align*}
Under Assumption \ref{lip_reg}, $K_\beta^*$ is continuous, and it can be shown that $k(\mu,\Theta)$ is continuous and $\eta(\cdot|\mu,\Theta)$ is weakly continuous in $\mu,\Theta$. Hence, taking the limit along $n'\to \infty$, 
\begin{align*}
K_\beta^*(\mu) = k(\mu,\Theta) + \beta \int K_\beta^*(\mu_1)\eta(d\mu_1|\mu,\Theta)
\end{align*}
which proves the claim that $\Theta$ is an optimal action for $\mu$. In other words, any for some $\mu_n\to \mu$ and $\Theta_n$ such that $\Theta_n$ is optimal for $\mu_n$, limit any convergent subsequence of $\Theta_n$ will be optimal for $\mu$. 

We now define the following operator for any function $h:\P_N(\mathds{\mathds{X}}) \to \mathds{R}$ for the finite population problem such that
\begin{align*}
(T^N h)(\mu^N) = k(\mu^N,g_\beta(\mu^N)) + \beta \int h(\mu_1^N)\eta(d\mu_1^N|\mu^N,g_\beta(\mu^N))
\end{align*}
where $g_\beta(\mu^N) = \gamma(du|x,\mu^N)\mu^N(dx)$, i.e. $g_\beta$ is the resulting team policy when the agents use symmetric policies from the infinite population problem. Note that $T^N$ is a contraction and 
\begin{align*}
\lim_{k\to \infty} (T^N_k h) (\mu^N)= K_\beta^N(\mu^N,g_\beta)
\end{align*}
where $T_k^N$ denotes $T^N$ applied to the function $h$, $k$ times. The convergence above is uniform over $N$ since contraction modulus $\beta$ is independent of $N$ and since the stage-wise cost $c(c,u,\mu)$ is assumed to be uniformly bounded.

We claim that 
$
\lim_{N\to\infty} T^N_k (K_\beta^{N,*})(\mu^N) = K_\beta^*(\mu)
$
for all $k<\infty$ for all $\mu^N\to \mu$. We prove the claim by induction. For $k=1$:
\begin{align*}
T^N K_\beta^{N,*}(\mu^N) = k(\mu^N,g_\beta(\mu^N)) + \beta \int K_\beta^{N,*}(\mu^N_1)\eta(d\mu_1^N|\mu^N,g_\beta(\mu^N)).
\end{align*}
We assume that there exists some $\epsilon>0$ and a subsequence $N'$ such that 
\begin{align*}
\left|T^{N'} K_\beta^{N',*}(\mu^{N'})  - K_\beta^*(\mu)\right|>\epsilon.
\end{align*}
For this subsequence, there exists a further subsequence, say $N_m$, such that $g_\beta(\mu^{N_m})$ converges to some $\Theta$ that is optimal for $\mu$ by the initial argument we had at  the start of the proof. Hence, taking the limit of both sides along $N^{m}\to \infty$ and using Lemma \ref{key_lemma} with the fact that $K_\beta^{N,*}(\mu^N)\to K_\beta^*(\mu)$, we can have that
\begin{align*}
\lim_{N^{m}} T^{N^{m}} K_\beta^{{N^m},*}(\mu^{N^{m}}) = k(\mu,\Theta) + \int K_\beta^*(\mu_1)\eta(d\mu_1|\mu,\Theta)=K_\beta^*(\mu)  
\end{align*}
where the last equality follows since $\Theta$ is an optimal action for $\mu$. Finally, since the value function $K_\beta^*$ is unique, we reach a contradiction, which implies that $
\lim_{N\to\infty}( T^N K_\beta^{N,*})(\mu^N) = K_\beta^*(\mu)$.

For $k+1$:
\begin{align*}
T^N_k K_\beta^{N,*}(\mu^N) = k(\mu^N,g_\beta(\mu^N)) + \beta \int T^N_k(K_\beta^{N,*})(\mu^N_1)\eta(d\mu_1^N|\mu^N,g_\beta(\mu^N)).
\end{align*}
Using the same contradiction argument with the induction step that $ \lim_{N\to \infty}T^N_k(K_\beta^{N,*})(\mu^N)=K_\beta^*(\mu)$ we can conclude that 
\begin{align*}
\lim_{N\to\infty}( T_k^N K_\beta^{N,*})(\mu^N) = K_\beta^*(\mu), \quad \forall k.
\end{align*}

Finally, we write
\begin{align*}
\left|K_\beta^N(\mu^N,g_\beta)- K_\beta^*(\mu)\right| \leq \left|K_\beta^N(\mu^N,g_\beta)- ( T_k^N K_\beta^{N,*})(\mu^N) \right| +  \left|(T_k^N K_\beta^{N,*})(\mu^N) -K_\beta^*(\mu)\right|
\end{align*}
where the first term can be made arbitrarily small by choosing $k$ large enough for all $N$, and the the chosen $k$ the second term is shown to converge to $0$ as $N\to \infty$. This proves the first part of the result. The second part follows since we have $K_\beta^{N,*}(\mu^N)\to K_\beta^*(\mu)$ for any $\mu^N\to \mu$ weakly.

\section{Proof of Lemma \ref{key_lemma}}\label{key_lemma_proof}
For the given $\mu^N$ and $\Theta^N$, one can find state and action vectors ${\bf x} = [x^1,x^2,\dots,x^N]$ and  ${\bf u} = [u^1,u^2,\dots,u^N]$ such that $\mu_{\bf x}=\mu^N$ and $\mu_{(\bf x,u)}=\Theta^N$ where $\mu_{\bf x}$ denotes the empirical distribution of the vector ${\bf x}$.  Note that for any permutation $\sigma({\bf x,u})$, we will have that $\mu_{\sigma({\bf x})} = \mu_{\bf x}=\mu^N$ and $\mu_{\sigma({\bf x,u})} = \mu_{\bf(x,u)}=\Theta^N$

We also consider some ${\bf \hat{x}, \hat{u}} = [(x^1,u^1),\dots, (x^N,u^N)]$ such that  $(x^i,u^i)\sim \Theta(dx,du)$ for all $i= 1,\dots,N$.  We then have
$
W_1(\mu_{\bf \hat{x},\hat{u}}, \mu_{\bf x,u})= \min_\sigma\frac{1}{N}\sum_{i=1}^N \left| (\hat{x}^i,\hat{u}^i) - \sigma(x^i,u^i)\right|.
$
Note that the minimum above is achievable that is a particular permutation of the state-action vector ${\bf (x,u)}$ achieves the minimum. In what follows we consider this particular permutation.

We start by writing
\begin{align*}
&\left|\int g^N(\mu_1^N)\eta(d\mu_1^N|\mu^N,\Theta^N) - \int g(\mu_1)\eta(d\mu_1|\mu,\Theta)\right|\\
&\leq \left|\int g^N(\mu_1^N)\eta^N(d\mu_1^N|\mu_{\bf x},\mu_{\bf x,u}) - \int g^N(\mu_1^N)\eta(d\mu^N_1|\mu_{\bf \hat{x}},\mu_{\bf \hat{x},\hat{u}})\right|\\
&\quad + \left|  \int g^N(\mu_1^N)\eta(d\mu_1^N|\mu_{\bf \hat{x}},\mu_{\bf \hat{x},\hat{u}})- \int g(\mu_1)\eta(d\mu_1|\mu,\Theta)\right|.
\end{align*}
For the first term, we have that 
\begin{align*}
& \left|\int g^N(\mu_1^N)\eta(d\mu_1^N|\mu_{\bf x},\mu_{\bf x,u}) - \int g^N(\mu_1^N)\eta(d\mu_1|\mu_{\bf \hat{x}},\mu_{\bf \hat{x},\hat{u}})\right|\\
&= \left|\int g^N(\mu^N_{f({\bf x ,u, w})}) P(d{\bf w}) - \int  g^N(\mu^N_{f({\bf \hat{x} ,\hat{u}, w})}) P(d{\bf w}) \right|\\
&\leq K \int  W_1(\mu^N_{f({\bf x ,u, w})}, \mu^N_{f({\bf \hat{x} ,\hat{u}, w})}) P({\bf dw})\\
&\leq K \int \frac{1}{N} \sum_{i=1}^N \left|f(x^i,u^i,\mu_{\bf x},w^i,w^0) - f(\hat{x}^i,\hat{u}^i,\mu_{\bf \hat{x}},w^i,w^0)\right| P(d{\bf w}) \\
&\leq K \frac{1}{N}\sum_{i=1}^N \left( |x^i- \hat{x}^i| + |u^i-\hat{u}^i| + W_1(\mu_{\bf x},\mu_{\bf \hat{x}})  \right)= K \left(W_1(\mu_{ \bf x,u} ,\mu_{\bf \hat{x}, \hat{u}}) + W_1(\mu_{\bf x},\mu_{\bf \hat{x}})\right)\to 0.
\end{align*}
where $\mu^N_{f({\bf x ,u, w})}$ denotes the empirical distribution of the upcoming local state variables of the agents, and $P(d{\bf w})$ is simply the probability measure for the local noise variables of the agents $\{w_1,\dots,w^N\}$ and the common noise $w^0$. The last inequality follows since we use a particular permutation of $f({\bf x ,u, w})= \{f(x^i,u^i,\mu_{\bf x},w^i,w^0) \}_{i=1}^N$. The last equality follows since the particular permutation fixed at the start is the one that achieves $W_1(\mu_{ \bf x,u} ,\mu_{\bf \hat{x}, \hat{u}}) $. Finally, the convergence to 0 holds since $\mu_{\bf x,u}=\Theta^N\to \Theta$ by assumption, and $\mu_{\bf \hat{x},\hat{u}}\to \Theta$ since they form an empirical measure for $\Theta$. 

We now focus on the second term:
$
 \left|  \int g^N(\mu_1^N)\eta(d\mu_1|\mu_{\bf \hat{x}},\mu_{\bf \hat{x},\hat{u}})- \int g(\mu_1)\eta(d\mu_1|\mu,\Theta)\right|.$ Note that the local noise of the agents, i.e. $w^i$, are i.i.d.. We consider an extended probability space for the infinite sequence of the local noise variables $\{w^1,w^2,\dots,w^N,\dots\}$, in which every $\omega\in \Omega$ gives rise to a possibly  different sequence.  Then we write the above term as
\begin{align*}
\int  g^N(\mu^N_{f({\bf \hat{x} ,\hat{u}, w})})\prod_{i=1}^NP(dw^i)P(dw^0) = \int  g^N(\mu^N_{f({\bf \hat{x} ,\hat{u}, w(\omega)})}) P(d\omega)P(dw^0) 
\end{align*}
where ${\bf w}(\omega)$ is simply the first $N$ terms of the local noise variables and the common noise variable.  Observe that for a given common noise $w^0$ and a sequence of local noise variables (or a given $\omega$), $\mu_1^N=\mu^N_{f({\bf \hat{x} ,\hat{u}, w})}$ is the empirical distribution of  $\{f(\hat{x}^i,\hat{u}^i,\mu_{\bf \hat{x}},w^i,w^0)\}_i$ where $(\hat{x}^i,\hat{u}^i)\sim \Theta$, and the local noise $w^i$ are i.i.d.. Hence, for a given common noise $w^0$,  and for any continuous function $h$ with unit Lipschitz constant, we have that 
\begin{align*}
&\int h(x_1)\mu_1^N(dx_1) = \frac{1}{N}\sum_{i=1}^N h(f(\hat{x}^i,\hat{u}^i,\mu_{\bf \hat{x}},w^i_{(\omega)},w^0))\pm h(f(\hat{x}^i,\hat{u}^i,\mu,w^i_{(\omega)},w^0))\\
&\leq K_f W_1(\mu_{\bf x},\mu) +  \frac{1}{N}\sum_{i=1}^N h(f(\hat{x}^i,\hat{u}^i,\mu,w^i_{(\omega)},w^0)) \\
&\to \int h(f(x,u,\mu,w,w^0))\Theta(dx,du)P(dw) = \int h(x_1)\mathcal{T}^{w^0}(dx_1|x,u,\mu)\Theta(dx,du)
\end{align*}
for almost every $\omega\in \Omega$. 
For any given common noise realization $\mu_1^N(dx_1)$ converges to $\int \mathcal{T}^{w^0}(dx_1|x,u,\mu)\Theta(dx,du)$ almost surely. Hence by definition of the kernel $\eta$ of the infnite population problem,  and by the assumption that $g^N(\mu^N)\to g(\mu)$ for all $\mu^N\to\mu$ we can conclude 
\begin{align*}
 \int  g^N(\mu^N_{f({\bf \hat{x} ,\hat{u}, w(\omega)})}) P(d\omega)P(dw^0) \to \int g(\mu_1(\mu,\Theta,w^0))P(dw^0) =\int g(\mu_1)\eta(d\mu_1|\mu,\Theta)
\end{align*}
which conclude the proof.

\bibliographystyle{abbrv}

{\footnotesize
\bibliography{mfc_bibliography}}

\end{document}